\theoremstyle{plain}
\newtheorem{theorem}{Theorem}[section]
\newtheorem*{theorem*}{Theorem}
\newtheorem{lemma}[theorem]{Lemma}
\newtheorem{proposition}[theorem]{Proposition}
\newtheorem{corollary}[theorem]{Corollary}
\newtheorem{remark}[theorem]{Remark}
\newtheorem{problem}[theorem]{Problem}
\theoremstyle{definition}
\newtheorem{definition}{Definition}[section]
\newtheorem{algorithm}[definition]{Algorithm}
\def\F{\mathcal F}
\def\C{\mathcal C}
\def\X{\mathcal X}
\def\calF{\mathcal{F}}
\def\ft{\mathrm{ft}}
\def\pw{\mathrm{pw}}
\def\bb1{\mathds 1}
\renewcommand{\subset}{\subseteq}
\newcommand{\rev}[1]{\overleftarrow{#1}}
\newcommand{\sources}{\mathsf{Sources}}
\newcommand{\sinks}{\mathsf{Sinks}}
\newcommand{\white}{\mathsf{White}}
\newcommand{\fas}{\mathsf{FAS}}
\newcommand{\sm}{\setminus}
\newcommand{\ie}{{i.e.}}
\begin{document}

\title{An approximation algorithm for zero forcing}

\author[B. Cameron]{Ben Cameron}
\address{School of Mathematical and Computational Sciences, University of Prince Edward Island, Charlottetown, PE, Canada.}
\email{brcameron@upei.ca}

\author[J. Janssen]{Jeannette Janssen}
\address{Department of Mathematics \& Statistics, 
Dalhousie University, Halifax, Nova Scotia, Canada.}
\email{jeannette.janssen@dal.ca}

\author[R. Mathew]{Rogers Mathew}
\address{Department of Computer Science \& Engineering, Indian Institute of Technology, Hyderabad}
\email{rogers@cse.iith.ac.in}

\author[Z. Zhang]{Zhiyuan Zhang}
\address{Toronto Metropolitan University}
\email{zhiyuan.zhang@torontomu.ca}


\begin{abstract}
We give an algorithm that finds a zero forcing set which approximates the optimal size by a factor of $\pw(G)+1$, where $\pw(G)$ is the pathwidth of $G$. Starting from a path decomposition, the algorithm runs in $O(nm)$ time, where $n$ and $m$ are the order and size of the graph, respectively. As a corollary, we obtain a new upper bound on the zero forcing number in terms of the fort number and the pathwidth. The algorithm is based on a correspondence between zero forcing sets and forcing arc sets. 
This correspondence leads to a new bound on the zero forcing number in terms of vertex cuts, and to new, short proofs for known bounds on the zero forcing number.
\end{abstract}

\maketitle

\section{Introduction}

The \emph{zero forcing process} was first introduced in~\cite{PhysicsFirst} with a physical motivation and independently as a process in graph theory in~\cite{barioli2008zero}. The graph process is defined as follows.
Let $G = (V, E)$ be a finite simple graph (as all will be in this work unless indicated otherwise). Partition the vertex set into two sets $W$ and $B\neq\emptyset$. We say that the vertices in $B$ are coloured blue, and those in $W$ are coloured white. The \textit{colour change rule} that we will use throughout this paper is that for a vertex $v\in W$, this vertex will change to blue if and only if there is a vertex, $u\in B$, adjacent to $v$ such that $N[u]\cap W=\{v\}$. That is, $v$ is the only white neighbour of $u$. In this case, we say that $u$ \emph{forces} $v$. The \textit{derived colouring} of a vertex subset $S\subseteq V$ is the colouring of the graph obtained by starting with only $S$ coloured blue and iteratively applying the colour change rule until no more vertices can change colour. 
The derived colouring of a vertex set is well-defined in the sense that, independent from the order of forcing, the derived colouring is unique~\cite{barioli2008zero,barioli2010problems}.
A \textit{zero forcing set} for a graph is a subset of the vertices of $G$ such that all vertices are blue in its derived colouring. The \textit{zero forcing number} of a graph, denoted $Z(G)$, is the minimum size of a zero forcing set for $G$.

The physical motivation of the zero forcing problem is as a scheme for controlling quantum systems~\cite{PhysicsQuantum}, and the graph-theoretic motivation comes from the relationship to the minimum rank of symmetric matrices associated with a given graph; see, for example, \cite{barioli2008zero,FALLAT2007558,hogben2022book}. 

\subsection{Related work}
There is a considerable amount of work on zero forcing and its many extensions and variations; see the recent book~\cite{hogben2022book} for an overview. Of special relevance for our work is the concept of \emph{forts}. A fort is a set of vertices such that no vertex outside the fort has exactly one neighbour in the fort. Since no vertex inside the fort can be forced by a vertex outside, a zero forcing set must contain at least one vertex of each fort. The maximum number of disjoint forts, known as the \emph{fort number}, thus provides a lower bound on the zero forcing number.
Forts were first defined in~\cite{fast2018effects} and have since been applied to zero forcing in many ways, including formulating integer programming models for zero forcing~\cite{BrimkovFastHicks2019}, bounding evaluations of the zero forcing polynomial~\cite{boyer2019zfpolyn}, considering the zero forcing process on various product and grid graphs~\cite{BEAUDOUINLAFON202035,KARST2020380}, and analyzing variations of the zero forcing process \cite{cameron2023forts}.

It turns out that computing the zero forcing number is NP-hard~\cite{aazami2008hardness}. 
Code that computes the zero forcing number is available at~\cite{sagecode}. 
Efficient algorithms for special families of graphs have been proposed, such as unicyclic graphs~\cite{row2012technique} and claw-free graphs~\cite{brimkov2022computer}.

Given the NP-hardness of the problem, the main focus of many works is on finding an upper bound on the zero forcing number in terms of graph parameters and structures, such as 
vertex cover number \cite{brimkov2022computer}, 
spectral radius \cite{das2023spectral, zhang2022zero}, 
girth and degree \cite{butler2013throttling, davila2018bounds, GentnerPensoRautenbachSouza2016, liang2022extremal}, 
matchings\cite{davila2020matching, jing2023zero, trefois2013zero},
cyclomatic number \cite{jing2023zero, wang2020zero}, 
and path covers \cite{davila2020matching, GentnerPensoRautenbachSouza2016, montazeri2020relationship}.  
The zero forcing number for specific families of graphs has also been extensively considered; for example, 
Johnson graphs and generalized Grassmann graphs \cite{abiad2023diameter}, generalized Petersen graphs \cite{alameda2018families},
circulant graphs \cite{duong2020maximum},
unicyclic graphs \cite{row2012technique},
claw-free graphs \cite{brimkov2022computer},
random graphs \cite{bal2021zero, Kalinowskietal2019}, and 
2-connected planar graphs \cite{ison2023zero}.

\subsection{Forcing arc sets}
It was observed early on \cite{barioli2010problems} that 
an instance of the zero forcing process on a graph $G=(V, E)$ with an initial blue set $B_0$ can be characterized by a set of \emph{forcing chains}. These are directed paths $v_1v_2\dots v_k$ so that $v_1\in B_0$, and for all $1\leq i<k$, $v_i$ \emph{forces} $v_{i+1}$ (that is, changes $v_{i+1}$ from white to blue). Since no vertex can force more than one other vertex,  the forcing chains are vertex-disjoint.

It can be deduced from the forcing process that any forcing chain must be an \emph{induced} path. This observation has an immediate corollary that $Z(G)\ge P(G)$ where $P(G)$ is the \textit{path cover number}, that is, the minimum number of induced paths such that every vertex of $G$ lies on exactly one of these paths. In~\cite{GentnerPensoRautenbachSouza2016}, it was shown that if $Z(H)=P(H)$ for all induced subgraphs $H$ of $G$, then $G$ is a cactus. 

In our paper, we define \emph{forcing arc sets}, which are the sets of arcs that make up a set of forcing chains. That is, each forcing arc $(u,v)$ represents the relation ``$u$ forces $v$''. 
Forcing arc sets were independently introduced as so-called parallel increasing path covers (or PIPs) in~\cite{PIPs}. The concepts are not entirely equal: PIPs are represented explicitly as collections of directed paths while forcing arc sets are collections of arcs which implicitly define the paths.

\subsection{Main results}

Our main result is an approximation algorithm for finding a zero forcing set of a graph. This algorithm is stated as Algorithm~\ref{algorithm} and its correctness is shown in Theorem \ref{thm: proof of correctness}. The algorithm assumes that a \emph{path decomposition} of the graph is given, and the approximation ratio of the algorithm depends on the width of the decomposition. 
If the decomposition is optimal, then for a graph $G$ with pathwidth $\pw(G)$, the algorithm returns a zero forcing set $S$ so that $|S|\leq (\pw(G)+1)Z(G)$. Our algorithm runs in $O(nm)$, where $n $ and $m$ are the order and size of the graph, respectively.

Our algorithm returns a zero forcing set and a collection of disjoint forts, which provide a lower bound on the zero forcing number and, therefore, on the approximation ratio. The algorithm thus also leads to a new bound on the ratio between the zero forcing number and the fort number. This bound is stated as Corollary \ref{cor:bound}. Specifically, we show that the ratio between the fort number and the zero forcing number is bounded above by $\pw(G)+1$. 
While it was known that the {\sl fractional} versions of zero forcing number and fort number are equal~\cite[Remark 2.6]{cameron2023forts}, previously no upper bound on the gap between the fort number and zero forcing number was known. 

The algorithm is based on  a characterization of forcing arc sets in terms of certain types of cycles formed by arcs in the forcing arc set and edges of $G$ which we call \emph{chain twists}. Chain twists are formally defined in Definition \ref{def:chain twist}. Loosely stated, we show that a set of arcs is a forcing arc set if and only if it does not contain a chain twist. This result is given as Theorem \ref{thm:chaintwist}. A similar result is shown in \cite[Theorem 3.8]{PIPs}: they define a \emph{relaxed chronology} which essentially is a set of directed paths without a chain twist, and they then show that a set of paths is a PIP if and only if it is a relaxed chronology. However, our representation in terms of arcs makes it easy to combine forcing arc sets from different parts of a graph.

In particular, if a graph $G=(V,E)$ has a vertex cut $C$ and $V_1$ is the vertex set of a connected component of $G-C$, then under certain conditions we can combine forcing arc sets in $G_1=G[V_1\cup C]$ and in $G_2=G[V\setminus V_1]$ to find a forcing arc set in $G$. This result is given in Proposition \ref{prop:cut}. This combination of arc sets is an essential ingredient of the algorithm, allowing us to use the fact that each set of the path decomposition of a graph is a vertex cut to iteratively build a forcing arc set for the whole graph. The proposition also directly leads to a bound on $Z (G)$ in terms of the zero forcing number of its constituent parts. This bound is stated in Proposition \ref{prop:cutbound}.

There is a simple relationship between the size of a zero forcing set and the number of forcing arcs representing a forcing process starting from this set. This insight allows us to give new, short proofs of existing results. 
In particular, we give new proofs of a known result on the zero forcing number of proper interval graphs and of an upper bound on the zero forcing number of the strong product of two graphs in terms of the zero forcing number of the two components. We also give a new result on the zero forcing number of a graph with a vertex cut in terms of the zero forcing number of the constituent subgraphs. This is  Proposition~\ref{prop:cut}. The proofs follow immediately from the basic properties of forcing arc sets and chain twists. Moreover, the proofs are \emph{constructive}, in the sense that they describe how to obtain an optimal forcing arc set.

\subsection{Pathwidth}

Our algorithm requires that we first obtain a path decomposition. Determining whether the pathwidth of a graph $G$ is at most $k$ is NP-complete \cite{Arnborg}. The pathwidth problem is known to be NP-complete for many other classes of graphs like bounded degree graphs, planar graphs, and chordal graphs; see \cite{Bodlaender_tourist} for more details). However, there are {\em fixed-parameter tractable} (FPT) algorithms that run in $O(f(k)\cdot n)$, where $f(k)$ is exponential in $k$, 
to determine if the pathwidth of a given graph on $n$ vertices is at most $k$ and if so give a path decomposition of width $k$ (for example, \cite{BODLAENDER1}). Thus, for graphs with constant pathwidth, we can use the FPT algorithm mentioned above to obtain an optimal path decomposition in linear time and then give that as an input to our algorithm to find a zero forcing set $S$. Our results guarantee that the size of $S$ is at most a constant times the size of an optimal zero forcing set.

Our algorithm thus gives an efficient way to approximate the zero forcing number for graphs with low pathwidth.
Regarding graphs of low pathwidth, it is known (see~\cite{Bodlaender2012}) that for any given forest $H$, there is a constant $c_H$ such that every graph that does not have $H$ as a minor has its pathwidth bounded from above by $c_H$. Very recently, it was shown in \cite{BrianskiJMMSS23} that the pathwidth of any $2$-connected graph with circumference $k$ is at most $k-1$, where \emph{circumference} of a graph with at least one cycle is the length of the longest cycle in it. It was shown by Korach and Solel~\cite{KORACH_pw} that the pathwidth of a graph on $n$ vertices is at most $c\log n$ times its treewidth, where $c$ is a constant. Thus, graphs with low or constant treewidth, like series-parallel graphs and Halin, have low pathwidth.

\subsection{Organization}

The remainder of the paper is organized as follows. In Section~\ref{sec:chaintwists} we 
give the definition of forcing arc sets and chain twists, and prove the theorem which relates forcing arc sets to chain twists.
In Section~\ref{sec:vertexcut} we give our results on forcing arc sets and zero forcing sets in graphs with a vertex cut. 
In Section~\ref{sec:approxalg} we give the approximation algorithm, prove its correctness, and derive its complexity. 
Finally, in Section~\ref{sec:newproofs}, we derive simple facts about forcing arc sets which lead to new proofs for known results.

\section{Forcing arc sets and chain twists}\label{sec:chaintwists}

As stated earlier, a forcing arc set is a set of directed edges, or arcs, that represent a zero forcing process. Specifically, an arc $(u,v)$ in the forcing arc set represents the relation ``$u$ forces $v$'' 
The question we answer in this section is: when is an arc set a forcing arc set? We will answer this question by replacing the characterization of forcing arcs in terms of forcing chains with a structural condition that is independent of any zero forcing process. To do this, we consider conditions that prevent a set of arcs from being a forcing arc set.

Before we can formalize the concept of forcing arc sets, we need to address the fact that the relation ``$u$ forces $v$'' can be ambiguous. It may be that, during the zero forcing process, two blue vertices share a unique white neighbour $w$. We will assume that each white vertex is forced by one neighbour only. This implies that the same zero forcing process may be represented by different forcing arc chains. 

A \emph{source} in the digraph is a vertex of in-degree zero, while a \emph{sink} is a vertex with out-degree zero.
In this work, for a graph $G = (V, E)$, an {\em arc set $A\subseteq V\times V$ of $G$} is a collection of arcs $(u,v)$ so that $uv\in E$. Arc sets are required to be unidirectional so that $(v, u)\not\in A$ if $(u, v)\in A$. 
We will say $v$ is a sink ({\em resp.} source) of $A$ if $v$ is a sink ({\em resp.} source) of the directed graph $H = (V, A)$ if the context is clear.
\begin{definition}\label{def:fas} 
Given a graph $G = (V, E)$, an arc set $A$ of $G$ is a \emph{forcing arc set} if 
\begin{enumerate}[label=(P\arabic*),align=right]
\item\label{cond:fas1} the directed graph $H=(V, A)$ consists of a collection of vertex-disjoint dipaths, and
\item\label{cond:fas2} 
there is a zero forcing process on $G$ and $(u,v)\in A$ precisely if blue vertex $u$ forces white vertex $v$ during a step of the process. 
\end{enumerate}  
We say that $A$ \emph{represents} the zero forcing process.
The set of sources of $A$ is the zero forcing set of the process. 
\end{definition}

By our convention that a vertex can only be forced by one other vertex in a zero forcing process, we have that each vertex in $V$ can have at most one in-neighbour. From the definition of the zero forcing process, we also have that each vertex can have at most one out-neighbour and that vertices in the zero forcing set have in-degree zero.  Thus,   \ref{cond:fas2} implies \ref{cond:fas1}; we included \ref{cond:fas1} to make the condition explicit. The dipaths in $H=(V,A)$ are exactly the forcing chains of the zero forcing process. Note that such a path can have length zero, corresponding to a vertex in the zero forcing set that did not force any other vertex.

We will now replace \ref{cond:fas2} in Definition~\ref{def:fas} with a structural condition that is independent of any zero forcing process. To do this, we consider conditions that prevent a set of arcs from being a forcing arc set. 

Firstly, each forcing chain must correspond to an \emph{induced path} of $G$, which was first observed in \cite[Proposition 2.10]{barioli2010problems}. If $v_0v_1\dots v_{k-1}$ is a forcing chain, then $G$ cannot contain an edge $v_iv_j$ with $j>i+1$ ($v_i$ cannot force $v_{i+1}$ while the vertex $v_j$ is still white since it would have at least two white neighbours). 

To see a second obstruction, consider Figure \ref{fig:chaintwist} that portrays three dipaths. 
We see that the three forcing chains will stop at $u_i$, $v_{i'}$, and $w_{i''}$, respectively as each requires another of its neighbours higher on another forcing chain to turn blue before it can force further, and thus the zero forcing process halts.  Thus, a configuration as in Figure~\ref{fig:chaintwist} can not occur in a forcing arc set. This corresponds to a set of directed paths connected by edges of $G$ that are not part of the arc set, which leads to the following definition.

\begin{figure}
	\centering
		\begin{tikzpicture}[shorten >=1pt,->]
 		\tikzstyle{vertex}=[circle,draw=black!60,minimum size=12pt,inner sep=2pt]
        \tikzstyle{path} = [->, dashed, line width = 1.3pt]
 		\node[vertex] (v11) at (0,0) {$u_{1}$};
 		\node[vertex] (u1) at (2,0) {$u_{i}$};
 		\node[vertex] (w1) at (4,0) {$u_{j}$};
 		\node[vertex] (v14) at (6,0) {$u_{l}$};
  
  		\node[vertex] (v21) at (0,-1.5) {$v_{1}$};
  		\node[vertex] (u2) at (2,-1.5) {$v_{i'}$};
  		\node[vertex] (w2) at (4,-1.5) {$v_{j'}$};
 		\node[vertex] (v24) at (6,-1.5) {$v_{m}$};
  
  		\node[vertex] (v31) at (0,-3) {$w_{1}$};
  		\node[vertex] (u3) at (2,-3) {$w_{i''}$};
  		\node[vertex] (w3) at (4,-3) {$w_{j''}$};
  		\node[vertex] (v34) at (6,-3) {$w_{k}$};

  		\draw[path] (v11) -- (1.7,0);
        \draw[path] (u1) -- (3.65,0);
        \draw[path] (w1) -- (5.7,0);
        
        \draw[path] (v21) -- (1.65,-1.5);
        \draw[path] (u2) -- (3.65,-1.5);
        \draw[path] (w2) -- (5.65,-1.5);
        
  	    \draw[path] (v31) -- (1.6,-3);
        \draw[path] (u3) -- (3.6,-3);
        \draw[path] (w3) -- (5.65,-3);     
  
  		\draw (u1)--(w2)--cycle; 
  		\draw (u2)--(w3)--cycle;
  		\draw (u3)--(w1)--cycle; 
		\end{tikzpicture}
		\caption{A set of dipaths that contains a chain twist}
		\label{fig:chaintwist}  
\end{figure}
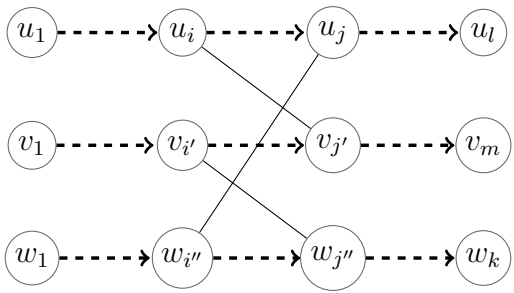%

\begin{definition}[Chain twist]\label{def:chain twist}
Given a graph $G$ and an arc set $A$ of $G$, a \emph{chain twist in $A$} is a cycle $v_0v_1\dots v_{k-1}v_0$ in $G$ so that, for all $0\leq i\le k$, 
\begin{equation}\label{eq:chaintwist}\tag{CT}
    (v_{i},v_{i+1})\not\in A\implies (v_{i-1},v_{i})\in A\text{ and }(v_{i+1},v_{i+2})\in A,
\end{equation}
where the addition of indices is taken modulo $k$.
If a cycle in $G$ satisfies \eqref{eq:chaintwist}, we say {\em $A$ contains a chain twist}.
\end{definition}
\begin{remark}\label{rmk:remark-ct}
Note that the occurrence of a chain twist with only arcs in $A$ implies that $H$ contains a directed cycle, so $A$ cannot be a collection of forcing chains. 
The occurrence of a chain twist with exactly one edge not in $A$ implies that one of the dipaths formed by arcs of $A$ does not correspond to an induced path; therefore this dipath cannot be a forcing chain.
\end{remark}

\begin{theorem}\label{thm:chaintwist}
Let $G=(V, E)$ be a graph and an arc set $A$ of $G$ that satisfies \ref{cond:fas1}. 
The arc set $A$ is a forcing arc set if and only if $A$ does not contain a chain twist. 
\end{theorem}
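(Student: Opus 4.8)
The plan is to route both implications through a scheduling reformulation of \ref{cond:fas2}. Because a vertex $u$ forces $v$ only once all of its \emph{other} neighbours are already blue, I first establish that, given \ref{cond:fas1}, the set $A$ is a forcing arc set if and only if there is a \emph{firing time} $t\colon V\to\bbR$ with
\[
t(w)<t(v)\qquad\text{for every arc }(u,v)\in A\text{ and every }w\in N[u]\sm\{v\}.
\]
Such a $t$ exists exactly when the auxiliary digraph $\Gamma$ on $V$ — carrying an arc $w\to v$ for each pair above — is acyclic. In that case a topological order of $\Gamma$ supplies $t$, and firing the arcs of $A$ in increasing order of the time of their heads is a legal zero forcing run: when $(u,v)$ is fired, $u$ and every neighbour of $u$ other than $v$ are already blue (each is a source or was turned blue earlier, since its in-arc has a smaller head-time), while $v$ is still white because its only in-arc, by \ref{cond:fas1}, is $(u,v)$ itself. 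Since the number of arcs equals the number of non-sources, this run turns every non-source blue, so the sources form a zero forcing set that $A$ represents. Conversely, a genuine forcing process supplies $t$ by recording the step at which each vertex turns blue. It therefore suffices to prove that $\Gamma$ is acyclic if and only if $A$ contains no chain twist.

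For the direction ``forcing arc set $\Rightarrow$ no chain twist'', suppose $C=v_0v_1\cdots v_{k-1}v_0$ is a chain twist. Using \ref{cond:fas1} with \eqref{eq:chaintwist} I would first show that every edge of $C$ is either a forward arc of $A$ or a \emph{pure gap} (in $A$ in neither direction), and that no two gaps are consecutive: if $(v_i,v_{i+1})\notin A$ then \eqref{eq:chaintwist} forces $(v_{i-1},v_i)\in A$, so a reverse arc $(v_{i+1},v_i)$ would give $v_i$ two in-arcs, violating \ref{cond:fas1}, and it also forces the following edge to be an arc. Hence $C$ splits into maximal directed segments separated by single gaps, with at least one gap present (an all-arc cycle would contradict \ref{cond:fas1}). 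Along each segment $t$ strictly increases, and at each gap $v_pv_{p+1}$ the arc $(v_{p+1},v_{p+2})$ forces $v_{p+2}$ while $v_p$ is a neighbour of $v_{p+1}$, giving $t(v_p)<t(v_{p+2})$. Chaining these across the cyclically arranged segments yields a strict cyclic chain of inequalities among the segment endpoints, a contradiction; equivalently, these very inequalities exhibit a directed cycle in $\Gamma$.

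For the converse I argue the contrapositive: a directed cycle in $\Gamma$ produces a chain twist. I would take a \emph{shortest} directed cycle $z_0\to z_1\to\cdots\to z_{l-1}\to z_0$ in $\Gamma$ and expand it into a closed walk $W$ in $G$. Each $\Gamma$-arc $z_s\to z_{s+1}$ arises from an in-arc $(u_{s+1},z_{s+1})\in A$ with $z_s\in N[u_{s+1}]\sm\{z_{s+1}\}$, so I replace it by the single arc $z_s\to z_{s+1}$ when $z_s=u_{s+1}$, and otherwise by the two-edge path $z_s\to u_{s+1}\to z_{s+1}$ (a cross edge followed by the forcing arc). Every edge of $W$ is then traversed as a forward arc of $A$, except the inserted cross edges, and each such cross edge is immediately preceded and followed by forward arcs of $A$; in every case this makes the implication \eqref{eq:chaintwist} hold, so $W$ satisfies the chain-twist condition.

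The main obstacle is to guarantee that $W$ is an honest (simple) cycle, since \eqref{eq:chaintwist} is stated for cycles of $G$, and here minimality does the work. The $z_s$ are distinct because the cycle is shortest; the inserted vertices $u_{s+1}$ are pairwise distinct because out-degrees are at most one under \ref{cond:fas1}; and no $u_{s+1}$ can coincide with some $z_j$, for then $(u_{s+1},z_{s+1})\in A$ would give a $\Gamma$-arc $z_j\to z_{s+1}$ shortcutting the cycle and contradicting minimality. A short separate check (when $l=2$ at least one cross edge must be inserted, as $A$ is unidirectional) shows $W$ has length at least three. Thus $W$ is a simple cycle satisfying \eqref{eq:chaintwist}, i.e.\ a chain twist, which completes the proof.
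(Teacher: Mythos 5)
Your proof is correct, and while your first direction (chain twist implies no valid schedule) is essentially the paper's argument in different clothing --- their increasing index sequence $i_s$, which advances by one along arcs of $A$ and by two across gaps, encodes exactly your inequalities $t(v_i)<t(v_{i+1})$ along arcs and $t(v_p)<t(v_{p+2})$ across gaps --- your converse takes a genuinely different route. The paper argues dynamically: it runs the forcing process from the sources of $A$ until it halts, looks at the dipaths $P_1,\dots,P_k$ still containing white vertices, picks for each the last blue vertex $b_i$ and a second white neighbour $w_i$ of $b_i$, builds an auxiliary digraph on the \emph{path indices} (arc $(i,j)$ when $w_i$ lies on $P_j$), extracts a directed cycle there, and splices the path segments $P_j[b_j:w_{j-1}]$ together with the edges $b_jw_j$ into a chain twist. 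You never run the process: you characterize forcing arc sets by schedulability of the precedence digraph $\Gamma$ on the \emph{vertices}, and expand a shortest $\Gamma$-cycle locally (each $\Gamma$-arc becoming either a single arc of $A$ or a cross edge followed by an arc of $A$) into a chain twist, with minimality of the cycle doing the work of guaranteeing simplicity. Your route buys a clean, reusable intermediate characterization (under \ref{cond:fas1}, $A$ is a forcing arc set iff $\Gamma$ is acyclic, i.e., the precedence constraints admit a topological order), it handles uniformly the case the paper must dispatch separately via Remark~\ref{rmk:remark-ct} (when the blocking white neighbour $w_i$ lies on $P_i$ itself), and it produces a locally short witness in which every gap is flanked by single arcs. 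The paper's route buys self-containedness --- no scheduling lemma is needed, the obstruction is read off directly from a halted run so the chain twist consists of actual forcing-chain segments --- and its simplicity argument comes for free from the vertex-disjointness of the dipaths rather than from minimality. One remark: the paper later proves Lemma~\ref{lem:closedwalk} (any closed walk satisfying \eqref{eq:chaintwist} yields a chain twist); had you invoked such a statement, your careful distinctness checks on the $z_s$ and the inserted $u_{s+1}$ could be dropped, though your minimal-cycle argument is a perfectly sound substitute.
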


\begin{proof}
Suppose first that $A$ is a forcing arc set. Suppose by contradiction that $A$ contains a chain twist $v_0\dots v_{k-1}v_k$, where $v_k=v_0$.
Without loss of generality, we assume that $(v_{k-1},v_0)\in A$.
We form a sequence $\{i_s\}_{s=0}^\ell\subset \{1,2,\dots,k\}$, $\ell\leq k$, of indices of vertices in the chain twist as follows. Let $i_0=0$. 
Fix $s\geq 0$ and assume $i_s$ has been defined where $i_s\leq k-1$, we set $i_{s+1}$ to be 
$$i_{s+1}= \left\{\begin{array}{rl}
    i_s+1 & \text{if } (v_{i_s},v_{i_s+1})\in A; \\
    i_s+2 & \text{otherwise.}
\end{array}\right.$$
The process ends when $i=i_\ell=k$. Note that $\{i_s\}$ is increasing and $i_\ell=k$ is always reached since $(v_{k-1},v_k)\in A$ so if $i_s=k-1$, then $i_{s+1}=k$.

 Consider the zero forcing process represented by $A$ and assume one vertex turns from white to blue in each step of the process.
We now argue that, for all $0\leq s<\ell$, $v_{i_s}$ turns blue before $v_{i_{s+1}}$ turns blue. Fix $0\leq s<\ell$ and let $i_s=i$. If $(v_i,v_{i+1})\in A$, then $v_i$ forces $v_{i+1}$, so $v_i$ turns blue before $v_{i+1}$ turns blue. Since we set $i_{s+1}=i+1$, the statement holds.
If $(v_i,v_{i+1})\not\in A$, then $v_iv_{i+1}\in E$ and $(v_{i+1},v_{i+2})\in A$, so $v_{i+1}$ forces $v_{i+2}$. The vertex $v_{i+1}$ can only force $v_{i+2}$ once its other neighbour $v_i$ has turned blue. So $v_i$ turns blue before $v_{i+2}$ turns blue.
In this case, we set $i_{s+1}=i+2$, so the statement again holds. 

We have now shown that for all $0\leq s<\ell$, $v_{i_s}$ turns blue before $v_{i_{s+1}}$ turns blue. 
From this, we can conclude that $v_0$ turns blue before $v_{i_\ell}$ turns blue. Since $i_\ell=k$ and $v_k=v_0$, this leads to a contradiction.

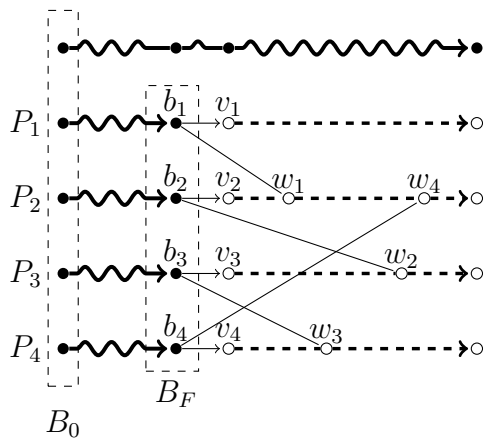
\begin{figure}
\centering
   
\begin{tikzpicture}[shorten >=1pt,->]
\tikzstyle{solid}=[circle, fill, inner sep=1.5pt]
\tikzstyle{empty}=[circle,draw=black, inner sep=1.5pt]
\tikzstyle{bold}=[->, snake=snake, line after snake=2mm, line before snake=1mm, line width=1.5pt]
\tikzstyle{dashed snake} = [->, dashed, line width = 1.3pt]
\tikzstyle{edge}=[-]
\tikzstyle{vertex}=[]

\node[solid] (00) at (0,0) {};
\node[solid] (01) at (1.5,0) {};
\node[solid] (03) at (2.2,0) {};
\node[solid] (09) at (5.5,0) {};
\draw[bold] (00) --  (01) -- (03) -- (09);

\foreach \i in {-1,...,-4}{ 
    \node[solid] (\i0) at (0,\i) {};
    \node[solid] (\i1) at (1.5,\i) {};
    \draw[bold] (\i0) -- (\i1);
}

\foreach \i in {1,...,4}{
    \node at (-.5, -\i) {$P_{\i}$};
}

\draw[black, dashed] (-0.2, 0.5) rectangle (0.2, -4.5); 
\node at (0, -5) {$B_0$};
\foreach \i in {-1,...,-4}{
    \node[empty] (\i3) at (2.2,\i) {};
    \node[empty] (\i9) at (5.5,\i) {}; 
    \draw (\i1) -- (\i3);
}
\foreach \i in {1,...,4}{
    \node at (1.5, -\i+.28) {$b_{\i}$};
}

\foreach \i in {1,...,4}{
    \node at (2.2, -\i+.22) {$v_{\i}$};
}


\node[empty] (u0) at (3, -2) {};
\node at (3, -1.8) {$w_1$};
\node[empty] (u1) at (4.5, -3) {};
\node at (4.5, -2.8) {$w_2$};
\node[empty] (u2) at (3.5, -4) {};
\node at (3.5, -3.8) {$w_3$};
\node[empty] (u4) at (4.8, -2) {};
\node at (4.8, -1.8) {$w_4$};

\draw[dashed snake] (-13) -- (-19);

\draw[edge] (-11) -- (u0);
\draw[edge] (-21) -- (u1);
\draw[edge] (-31) -- (u2);
\draw[edge] (-41) -- (u4);

\draw[dashed snake] (-23) -- (u0) -- (u4) -- (-29); 
\draw[dashed snake] (-33) -- (u1) -- (-39); 
\draw[dashed snake] (-43) -- (u2) -- (-49); 
\draw[dashed] (1.1, -.5)rectangle(1.8, -4.3) ; 
\node at (1.5, -4.6) {$B_F$};

\end{tikzpicture}
\caption{An illustration for the proof of the converse direction of Theorem~\ref{thm:chaintwist}.
The figure consists of $H = (V, A)$ that satisfies \ref{cond:fas1}, and the remaining edges in $G$ are omitted. 
Filled vertices correspond to blue vertices and unfilled vertices correspond to white vertices. 
Each dipath of $H$ that is not all blue consists of an initial segment of blue vertices, indicated by a squiggly arrow, followed by an arc $(b_i,v_i)$, indicated by a thin arrow, followed by a final segment of white vertices, indicated by a dashed arrow. 
Thin lines represent edges that are not in $A$.
In this case, we form a digraph $D$ with a vertex set $\{1,2,3,4\}$ and an edge set $\{(1,2), (2,3), (3,4), (4,2)\}$ where there is one directed cycle on $2,3,4$. 
}
\label{fig:enlighten}
\end{figure}

We will prove the converse by contrapositive, so we suppose that $A$ is not a forcing arc set and we will show that it contains a chain twist. 

Let $B_0$ be the set of sources of $A$; this includes the set of isolated vertices in $H = (V, A)$. We will now run the zero forcing process on $G$ starting from $B_0$ as follows: in each step, we choose an arc $(u,v)\in A$ so that $u$ is blue and $v$ is the only white neighbour of $u$, and we colour $v$ blue. 
We repeat this until no such arcs exist; let $B\subset V$ denote the set of blue vertices. 

By our assumption, $A$ is not a forcing arc set. 
Therefore, this process stops when there are still white vertices. 
By \ref{cond:fas1}, $H=(V, A)$ consists of a set of vertex-disjoint dipaths.  
Let $P_1, \dots, P_k$ denote the dipaths that still contain a white vertex. 
By definition, $B_0\subseteq B$, and thus each $P_i$ starts with consecutive blue vertices, which include the source, followed by consecutive white vertices, which include the sink. 
This implies that all dipaths $P_i$ contain an arc with a blue tail and a white head.

For $1\leq i\leq k$, we define three special vertices $b_i$, $v_i$ and $w_i$. Let $b_i$ denote the blue vertex in $P_i$ such that the next vertex $v_i$ (in the sense that $(b_i, v_i)\in A$) is white; therefore, $b_i$ and $v_i$ lie on $P_i$. 
Since we have completed all possible forcing on $G$, each $b_i$ has at least two white neighbours, where $v_i$ is one of them; 
let $w_i$ be another white neighbour of $b_i$. 
Note that if there exists $1\leq i\leq k$ such that $w_i$ and $b_i$ both lie on the same path $P_i$, then it forms a chain twist (see Remark~\ref{rmk:remark-ct}) and we are done; therefore, we assume that $w_i$ does not lie on $P_i$. 
Since $A$ consists of vertex-disjoint dipaths, for every $i$, $1\leq i\leq k$, there is a unique $j$, $1\leq j\leq k$ and $i\neq j$, so that $w_i$ lies on $P_j$. For an illustration, see Figure \ref{fig:enlighten}.

We can now find a chain twist through an auxiliary digraph {whose vertex set consists of the indices of the chain twist}. Form a digraph $D$ with vertex set $\{1,2,\dots, k\}$ which has an arc $(i,j)$ precisely when $w_i$ lies on path $P_j$. By definition, every vertex has out-degree one in $D$. Therefore, $D$ contains a directed cycle $C=i_0i_1i_2\dots i_\ell$, where $i_\ell=i_0$ and $2\leq \ell\leq k$ is the length of the cycle. Since we can always relabel the paths, we assume that $i_j=j+1$, $0\leq j<\ell$, so the cycle has vertices $1, 2,\dots, \ell$. 
Specifically, each white vertex $w_i$ lies on $P_{i+1}$ for $1\leq i\leq \ell$, and $w_\ell$ lies on $P_1$.

We introduce a notation that will be helpful: let $P$ be a dipath and let $u$ and $v$ be two vertices of $P$, where $u$ is comes earlier on the path than $v$, let $P[u:v]$ denote the section of the dipath following the arcs from $u$ to $v$. 
Now, consider the closed walk 
$$ W= P_{\ell}[b_{\ell}: w_{\ell-1}]P_{\ell-1}[b_{\ell-1}: w_{\ell-2}]\dots P_{\ell'-1}[b_{2}:w_{1}]P_{1}[b_{1}:w_{\ell}]b_{\ell}. $$
Note that, by definition, for all $1\leq j\leq \ell$, $b_jw_j$ is an edge in $G$ that is not part of $A$ and $P[b_j:w_{j-1}]$ (consider $u_0=u_k$) is a segment of path $P_j$. Thus
$W$ is indeed a closed walk in $G$. Since $C$ is a cycle, all path segments are distinct. Since $P_1, \dots, P_k$ is a collection of vertex-disjoint dipaths, the closed walk $W$ is a cycle in $G$. Since all $b_i$ are blue and $w_i$ white, each path segment is a dipath in $H$ containing at least one arc. So $W$ consists of a sequence of non-empty dipaths with arcs in $A$ connected by single edges in $E$. Therefore, $W$ is a chain twist. 
\end{proof}

With this theorem in hand, we can also define a forcing arc set of graph $G=(V, E)$ to be an arc set $A$ of $G$ that satisfies the following two conditions:
\begin{enumerate}[label=(Q\arabic*),start=2] {\sl 
\item[\ref{cond:fas1}] the directed graph $H=(V, A)$ consists of a collection of vertex-disjoint dipaths, and
\item\label{cond:fas3} $A$ does not contain a chain twist.}
\end{enumerate}

The following lemma follows directly from the definition.
\begin{lemma}\label{lem:subset}
If $A$ is a forcing arc set for $G$, then any subset $A'\subseteq A$ is a forcing arc set for $G$.
\end{lemma}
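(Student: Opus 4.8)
The plan is to prove Lemma~\ref{lem:subset} by using the characterization of forcing arc sets via conditions \ref{cond:fas1} and \ref{cond:fas3} established in Theorem~\ref{thm:chaintwist}, rather than arguing directly from the dynamical definition in terms of a zero forcing process. Concretely, I would show that if $A$ satisfies \ref{cond:fas1} and does not contain a chain twist, then the same two properties are inherited by every subset $A' \subseteq A$.

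The first step is to verify \ref{cond:fas1} for $A'$. Since $A$ is a disjoint union of dipaths, every vertex of $H=(V,A)$ has in-degree at most one and out-degree at most one. Deleting arcs to pass from $A$ to $A'$ can only decrease these degrees, so in $H'=(V,A')$ every vertex still has in-degree at most one and out-degree at most one, and no directed cycle can be created by removing arcs; hence $H'$ is again a disjoint union of dipaths. This step is entirely routine.

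The second and more substantive step is to verify \ref{cond:fas3} for $A'$, i.e.\ that $A'$ contains no chain twist. Here I would argue by contraposition: suppose some cycle $v_0 v_1 \dots v_{k-1} v_0$ of $G$ is a chain twist in $A'$. I claim the very same cycle is then a chain twist in $A$. The defining implication \eqref{eq:chaintwist} for a chain twist has the form ``$(v_i,v_{i+1})\notin A'$ implies $(v_{i-1},v_i)\in A'$ and $(v_{i+1},v_{i+2})\in A'$.'' Because $A'\subseteq A$, the hypothesis $(v_i,v_{i+1})\notin A$ is a \emph{stronger} condition than $(v_i,v_{i+1})\notin A'$, while the conclusions $(v_{i-1},v_i)\in A$ and $(v_{i+1},v_{i+2})\in A$ are \emph{weaker} than the corresponding statements for $A'$; so the implication for $A$ follows from the one for $A'$ at every index $i$. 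Thus the chain twist in $A'$ is also a chain twist in $A$, contradicting that $A$ is a forcing arc set.

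The main point to get right — the only place where a careless reading could go wrong — is this monotonicity of the chain-twist condition under taking subsets: shrinking the arc set makes the antecedent of \eqref{eq:chaintwist} easier to satisfy but the consequent harder, so it is not a priori obvious the condition is preserved. The resolution is precisely the inclusion $A'\subseteq A$, which lines the two directions up correctly. Combining the two steps, $A'$ satisfies both \ref{cond:fas1} and \ref{cond:fas3}, so by Theorem~\ref{thm:chaintwist} it is a forcing arc set, completing the proof.
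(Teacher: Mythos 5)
Your proof is correct and takes essentially the same route as the paper: the paper offers no written argument, stating only that the lemma ``follows directly from the definition,'' meaning precisely the characterization via \ref{cond:fas1} and \ref{cond:fas3} from Theorem~\ref{thm:chaintwist} that you invoke. Your two checks—that deleting arcs preserves the disjoint-dipath property, and that the inclusion $A'\subseteq A$ makes any chain twist in $A'$ automatically a chain twist in $A$ (since \eqref{eq:chaintwist} has its hypothesis strengthened and its conclusion weakened when passing from $A'$ to $A$)—are exactly the details the paper leaves implicit.
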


We now present a lemma about chain twists that will be of use in later sections. 

\begin{lemma}\label{lem:closedwalk}
Let $G$ be a graph and $A$ be an arc set of $G$.
If $G$ contains a closed walk $W=v_0v_1\dots v_{k-1}v_0 $ that satisfies \eqref{eq:chaintwist}, then $A$ contains a chain twist. 
\end{lemma}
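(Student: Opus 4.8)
The plan is to reduce an arbitrary closed walk satisfying \eqref{eq:chaintwist} to a vertex-simple cycle that still satisfies \eqref{eq:chaintwist}; such a cycle is a chain twist by Definition~\ref{def:chain twist}, which is exactly the conclusion. The first step is to rewrite the defining implication in a symmetric, purely local form. Writing $e_i=(v_i,v_{i+1})$ for the $i$-th step of $W$ (with indices taken modulo $k$), I would observe that \eqref{eq:chaintwist} holds for $W$ if and only if no two cyclically consecutive steps $e_i,e_{i+1}$ are both absent from $A$: two consecutive absent steps make the implication at index $i$ fail, and conversely, if $e_i\notin A$ then both $e_{i-1}$ and $e_{i+1}$ must lie in $A$. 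This reformulation is what makes the reduction transparent.

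Next I would induct on the length $k$. If the vertices $v_0,\dots,v_{k-1}$ are pairwise distinct, then $W$ is a cycle and hence already a chain twist. Otherwise some vertex repeats, say $v_a=v_b$ with $0\le a<b\le k-1$, and I split $W$ at this vertex into two strictly shorter closed walks: $W_1=v_av_{a+1}\dots v_{b-1}v_a$, which carries the steps $e_a,\dots,e_{b-1}$, and $W_2=v_bv_{b+1}\dots v_{a-1}v_b$, which carries all the remaining steps. Every pair of consecutive steps lying inside $W_1$ (or inside $W_2$) is inherited unchanged from $W$, so each sub-walk can only violate the reformulated condition at its single new seam: the pair $(e_{b-1},e_a)$ for $W_1$ and the pair $(e_{a-1},e_b)$ for $W_2$.

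The crux is the claim that at least one of $W_1,W_2$ still satisfies \eqref{eq:chaintwist}. Suppose both seams were bad. Then in particular $e_a\notin A$ (from $W_1$) and $e_{a-1}\notin A$ (from $W_2$); but $e_{a-1}$ and $e_a$ are genuinely consecutive steps of $W$, so by the reformulated condition they cannot both be absent from $A$, a contradiction. Hence one of the sub-walks again satisfies \eqref{eq:chaintwist}, and being strictly shorter it yields a chain twist by the induction hypothesis.

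The main obstacle I anticipate is not the seam bookkeeping above but making the induction bottom out at a genuine cycle: a priori the splitting could shrink $W$ down to a degenerate two-step closed walk $v_av_{a+1}v_a$, which satisfies the reformulated condition yet is not a cycle. Ruling this out is where one uses that $A$ is unidirectional (at most one of $(u,v),(v,u)$ belongs to $A$) together with the dipath structure \ref{cond:fas1} operative in our applications; these force any closed walk satisfying \eqref{eq:chaintwist} to contain a true cycle, so the reduction terminates at a chain twist rather than collapsing. Once this is in place, the remaining steps are routine.
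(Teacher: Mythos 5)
Your core reduction is exactly the paper's own proof. The paper also splits the walk at a repeated vertex into two closed sub-walks, observes that each sub-walk inherits \eqref{eq:chaintwist} everywhere except possibly at its single seam, and argues that the two seams cannot both fail (the paper phrases it as: if $W_1$'s seam consists of two absent edges, then \eqref{eq:chaintwist} applied to $W$ forces the two edges at $W_2$'s seam into $A$; your contradiction via the genuinely consecutive pair $e_{a-1},e_a$ is the same observation), and then finishes by a minimal-counterexample argument in place of your induction. Your reformulation of \eqref{eq:chaintwist} as ``no two cyclically consecutive steps are both absent from $A$'' is correct and makes the seam bookkeeping cleaner than in the paper.

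The gap is the claim your termination rests on in the final paragraph: it is false that unidirectionality of $A$ together with \ref{cond:fas1} forces every closed walk satisfying \eqref{eq:chaintwist} to contain a true cycle. Take $G=K_2$ on $\{u,v\}$ with $A=\{(u,v)\}$: this $A$ is unidirectional and $(V,A)$ is a single dipath, yet the backtracking closed walk $uvu$ satisfies \eqref{eq:chaintwist} (its only absent step $(v,u)$ is cyclically preceded and followed by $(u,v)\in A$), while $G$ contains no cycle at all, hence no chain twist. Longer walks such as $uvuvu$ behave the same way, so your induction genuinely can bottom out at a two-step walk, and no cleverer argument can close this hole, because these walks are counterexamples to the lemma itself under the permissive reading of ``closed walk.'' You identified a real soft spot here: the paper's proof has the same blind spot, since it tacitly equates ``$v_0,\dots,v_{k-1}$ pairwise distinct'' with ``$W$ is a cycle,'' which fails precisely for the two-step walk. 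But the repair is a hypothesis, not the structural facts you invoke: one must assume that $W$ never traverses an edge in both directions (equivalently, never contains both $(x,y)$ and $(y,x)$ as steps). That property is inherited by $W_1$ and $W_2$, since their steps partition the steps of $W$, and it excludes the degenerate base case, so every all-distinct-vertices walk reached by the recursion has length at least $3$ and is a genuine cycle. With that added hypothesis your argument (and the paper's) is complete; without it, your last step is wrong and the statement itself fails.
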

\begin{proof}
We will show that every closed walk that satisfies \eqref{eq:chaintwist} is either a cycle (and thus a chain twist) or it contains a shorter closed walk satisfying \eqref{eq:chaintwist}. This proves the lemma since no minimal counterexample to the statement is possible.

Let $W=v_0v_1\dots v_{k-1}v_0 $ be a closed walk satisfying \eqref{eq:chaintwist}. 
Suppose $W$ is not a cycle; there then exist indices $s<t $ so that $v_s=v_t$. Let 
\[
W_1=v_sv_{s+1}\dots v_t\text{ and }W_2=v_tv_{t+1}\dots v_{k-1}v_0\dots v_{s-1}v_s
\]
be the two closed walks along $W$ from $v_s$ to $v_t$ and from $v_t$ back to $v_s$. 
Since $W_1$ belongs to $W$, all edges of $W_1$ satisfy \eqref{eq:chaintwist} except that possibly the consecutive edges $(v_{t-1},v_t)$ and $ (v_s,v_{s+1})$ are both not in $A$. 
But in that case, $(v_t,v_{t+1})\in A$ and $(v_{s-1},v_s)\in A$, which implies that $W_2$ satisfies \eqref{eq:chaintwist} with a similar argument. 
\end{proof}

\section{Zero forcing with vertex cuts}\label{sec:vertexcut}

In this section, we examine how the zero forcing number of a graph with a vertex cut relates to the zero forcing numbers of the graphs formed from the components created by removing the cut. We use the dichotomy between forcing arc sets and zero forcing sets. We start by giving a straightforward relation between the number of arcs in a forcing arc set and the size of the corresponding zero forcing set. This result will prove useful in what follows. 

\begin{proposition}\label{prop:sizeBA}
Given a graph $G=(V,E)$ and a zero forcing set $B_0\subset V$ for $G$. Let $A$ be a forcing arc set of $G$ representing a zero forcing process on $G$ with the initial blue set $B_0$, we have that 
\begin{equation*}
|A|=|V|-|B_0|.
\end{equation*}
\end{proposition}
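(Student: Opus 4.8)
The plan is to count arcs by analyzing the structure of $H=(V,A)$, which by condition~\ref{cond:fas1} is a collection of vertex-disjoint dipaths. The key observation is that since these dipaths are vertex-disjoint and every vertex of $V$ lies on exactly one of them (a dipath may be a single isolated vertex of length zero), the dipaths partition $V$. First I would let the dipaths be $P_1,\dots,P_t$ with $|V(P_i)|=n_i$ vertices, so that $\sum_{i=1}^t n_i = |V|$. A dipath on $n_i$ vertices has exactly $n_i-1$ arcs, so summing gives $|A|=\sum_{i=1}^t(n_i-1) = |V| - t$, where $t$ is the number of dipaths.

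The remaining step is to identify $t$ with $|B_0|$. Here I would invoke the final sentence of Definition~\ref{def:fas}: the set of sources of $A$ is precisely the zero forcing set $B_0$ of the process. Each dipath $P_i$ has exactly one source (its first vertex, of in-degree zero within $P_i$), and since the dipaths are vertex-disjoint these sources are distinct and comprise exactly the in-degree-zero vertices of $H$. Thus the number of sources equals the number of dipaths, giving $t = |B_0|$, and substituting yields $|A| = |V| - |B_0|$.

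The only genuinely delicate point is to argue cleanly that the sources of $H$ are in bijection with the dipaths, including correctly accounting for length-zero dipaths (single isolated vertices), which are simultaneously their own source and sink. The excerpt already notes after Definition~\ref{def:fas} that such length-zero paths correspond to vertices of the zero forcing set that never force; these contribute a source but zero arcs, which is consistent with the formula $n_i - 1 = 0$ when $n_i = 1$. I expect no real obstacle here, as the result is essentially a counting identity once the dipath-partition structure is made explicit; the main care is simply to be precise that every vertex lies on exactly one dipath and each dipath contributes exactly one source.
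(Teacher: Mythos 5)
Your proposal is correct and follows essentially the same argument as the paper: both identify $B_0$ with the set of sources (one per dipath/component of $H$) and count arcs as $|V|$ minus the number of dipaths, the only cosmetic difference being that you sum $n_i-1$ over the paths while the paper invokes the forest identity (number of components equals $|V|-|A|$). No gap; the handling of length-zero dipaths is also consistent with the paper's remarks after Definition~\ref{def:fas}.
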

\begin{proof}
Each component of $H = (V, A)$ is a dipath and, thus, contains exactly one vertex of $B_0$. Thus, $|B_0|$ equals the number of components of $H$. Since the underlying graph of $H$ is a forest, the number of components equals $|V|-|A|$.
\end{proof}

This immediately gives the following corollary. 
\begin{corollary}\label{cor:sizeBA}
    For any forcing arc set $A$ of a graph $G$ of order $n$, we have that 
    \[
    |A|\leq n-Z(G).
    \]\hfill\qed
\end{corollary}
We will refer to forcing arc sets that attain the bound from Corollary~\ref{cor:sizeBA} as \emph{optimal} forcing arc sets. 

Define the {\em reverse of an arc set $A$}, denoted by $\overleftarrow{A}$, as the arc set obtained by reversing all arcs in $A$. Formally, 
\begin{equation*}
    \overleftarrow{A}=\{ (v,u)\,:\,(u,v)\in A\}.
\end{equation*}
It follows from the definition that the reversal of a chain twist is, again, a chain twist. This immediately leads to the following proposition, also observed in \cite{hogben2012propagation}.  

\begin{proposition}\label{prop:reverse}
    An arc set $A$ of $G$ is a forcing arc set if and only if $\overleftarrow{A}$ is a forcing arc set. 
    \hfill \qed
\end{proposition}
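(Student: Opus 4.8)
The plan is to prove Proposition~\ref{prop:reverse} by combining the characterization of forcing arc sets furnished by Theorem~\ref{thm:chaintwist} with the symmetry of the two conditions \ref{cond:fas1} and \ref{cond:fas3} under arc reversal. Since the statement is an ``if and only if'' and reversal is an involution (that is, $\rev{(\rev{A})}=A$), it suffices to prove one direction: if $A$ is a forcing arc set, then so is $\rev{A}$. The other direction follows by applying this to $\rev{A}$ in place of $A$.

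So suppose $A$ is a forcing arc set for $G$. By Theorem~\ref{thm:chaintwist}, this is equivalent to $A$ satisfying \ref{cond:fas1} and \ref{cond:fas3}. I would verify that $\rev{A}$ also satisfies both conditions. For \ref{cond:fas1}: the underlying undirected graph of $H=(V,A)$ and of $\rev H=(V,\rev A)$ are identical, and reversing every arc of a vertex-disjoint collection of dipaths again yields a vertex-disjoint collection of dipaths (each dipath $v_1 v_2\dots v_k$ simply becomes $v_k\dots v_2 v_1$). Hence $\rev A$ satisfies \ref{cond:fas1}. For \ref{cond:fas3}, I would invoke the remark made in the text just before the proposition, namely that the reversal of a chain twist is again a chain twist; this is the heart of the argument and I would justify it explicitly.

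The key observation for the chain-twist direction is that condition \eqref{eq:chaintwist} is symmetric under simultaneously reversing the cyclic orientation and reversing the arcs. Concretely, let $v_0 v_1\dots v_{k-1}v_0$ be a chain twist in $A$, and consider the reversed cycle $u_0 u_1\dots u_{k-1}u_0$ where $u_i=v_{k-i}$ (indices mod $k$). An arc $(u_i,u_{i+1})$ lies in $\rev A$ exactly when $(u_{i+1},u_i)=(v_{k-i-1},v_{k-i})$ lies in $A$. Substituting this translation into the implication \eqref{eq:chaintwist} for the reversed cycle, one checks that it is precisely the implication \eqref{eq:chaintwist} for the original cycle read in reverse order, which holds because the original was a chain twist in $A$. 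Thus the reversed cycle is a chain twist in $\rev A$. Taking the contrapositive, if $\rev A$ contains no chain twist then neither does $A$; more usefully here, since $A$ has no chain twist by \ref{cond:fas3}, the same symmetry shows $\rev A$ has none either.

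Having established that $\rev A$ satisfies both \ref{cond:fas1} and \ref{cond:fas3}, Theorem~\ref{thm:chaintwist} gives that $\rev A$ is a forcing arc set, completing the forward direction; the reverse direction is then immediate by the involution property. The main obstacle, such as it is, is purely bookkeeping: getting the index arithmetic modulo $k$ correct when one reverses the cyclic order, so that the hypothesis ``$(v_i,v_{i+1})\notin A \implies (v_{i-1},v_i)\in A$ and $(v_{i+1},v_{i+2})\in A$'' transfers cleanly to the reversed arc set. There is no genuine difficulty beyond this, since the definitions of both \ref{cond:fas1} and a chain twist are manifestly orientation-agnostic once the arcs are reversed in lockstep with the traversal direction.
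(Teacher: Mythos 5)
Your proof is correct and takes essentially the same route as the paper: the paper derives the proposition immediately from the observation that the reversal of a chain twist is again a chain twist, combined with the characterization in Theorem~\ref{thm:chaintwist}, which is exactly your argument. You simply make explicit the index bookkeeping and the verification of \ref{cond:fas1} that the paper leaves as a one-line remark.
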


The sources of $\overleftarrow{A}$ are the sinks of $A$. The sinks of a forcing arc set are the endpoints of the forcing chains. 
This leads to the following corollary that was first observed in \cite{barioli2010problems}.
\begin{corollary}
Let $B_0$ be a zero forcing set for $G$. Consider a set of forcing chains representing the zero forcing process with an initial blue set $B_0$. 
The endpoints of all forcing chains are again a zero forcing set. \hfill \qed
\end{corollary}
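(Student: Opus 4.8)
The plan is to deduce the final corollary directly from the two results that immediately precede it: Proposition~\ref{prop:reverse}, which asserts that $A$ is a forcing arc set if and only if its reversal $\overleftarrow{A}$ is a forcing arc set, and Definition~\ref{def:fas}, which identifies the zero forcing set of a process with the set of sources of the representing arc set. The key observation, already noted in the excerpt, is that the sources of $\overleftarrow{A}$ are exactly the sinks of $A$, and the sinks of a forcing arc set are precisely the endpoints of the forcing chains. So the whole argument is a matter of chaining these identifications together.

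First I would fix a zero forcing set $B_0$ for $G$ together with a set of forcing chains representing the process starting from $B_0$. Let $A$ be the corresponding forcing arc set, so that by Definition~\ref{def:fas} the sources of $A$ form $B_0$. Second, I would apply Proposition~\ref{prop:reverse} to conclude that $\overleftarrow{A}$ is also a forcing arc set of $G$; hence it represents some zero forcing process on $G$, and by Definition~\ref{def:fas} the zero forcing set of that process is the set of sources of $\overleftarrow{A}$. Third, I would observe that reversing every arc turns each source of $A$ into a sink and vice versa, so the sources of $\overleftarrow{A}$ are exactly the sinks of $A$. Since $H=(V,A)$ is a disjoint union of dipaths (condition~\ref{cond:fas1}), the sinks of $A$ are precisely the terminal vertices of those dipaths, i.e.\ the endpoints of the forcing chains. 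Combining these, the endpoints of the forcing chains form the source set of the forcing arc set $\overleftarrow{A}$, and are therefore a zero forcing set for $G$.

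There is essentially no hard step here; the corollary is almost a restatement of Proposition~\ref{prop:reverse} under the source/sink dictionary. The only point requiring a word of care is the degenerate case of a length-zero forcing chain, that is, a vertex of $B_0$ that forces nothing: such a vertex is simultaneously a source and a sink (an isolated vertex of $H$), so it is its own chain endpoint and appears in both $B_0$ and the endpoint set. This causes no difficulty, since an isolated vertex of $H$ is an isolated vertex of $\overleftarrow{H}$ as well and is correctly counted among the sources of $\overleftarrow{A}$. Thus the endpoint set is a legitimate zero forcing set regardless of whether any chains are trivial, completing the argument.
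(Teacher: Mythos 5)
Your argument is correct and is exactly the paper's own reasoning: the paper derives this corollary from Proposition~\ref{prop:reverse} via the same source/sink dictionary (sources of $\overleftarrow{A}$ are the sinks of $A$, which are the chain endpoints, and sources of a forcing arc set form a zero forcing set by Definition~\ref{def:fas}). Your extra remark about length-zero chains is a harmless elaboration of the same approach.
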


Obviously, a subset of a forcing arc set is again a forcing arc set. 
Forcing arc sets can also be combined to form new forcing arc sets. This is expressed in the following proposition.

\begin{proposition}\label{prop:cut}
    Let $G=(V,E)$ be a graph with a vertex cut $C$. Let $V_1$ be the vertex set of one of the components of $G-C$, and let $V_2=V\setminus (V_1\cup C)$. Let $G_1=G[V_1\cup C]$ and $G_2=G[V_2\cup C]$. If $A_1$ and $A_2$ are forcing arc sets in $G_1$ and $G_2$, respectively, so that each vertex in $C$ is a source of both $A_1$ and $A_2$, then $A_1\cup \overleftarrow{A_2}$ is a forcing arc set of $G$.   
\end{proposition}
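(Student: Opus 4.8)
The plan is to invoke Theorem~\ref{thm:chaintwist}: since a forcing arc set is precisely an arc set satisfying \ref{cond:fas1} that contains no chain twist, it suffices to prove that $A:=A_1\cup\overleftarrow{A_2}$ (i) is a union of vertex-disjoint dipaths and (ii) contains no chain twist. Throughout I would exploit two structural facts coming from the hypotheses. First, since $V_1$ is a whole component of $G-C$, there are no edges of $G$ between $V_1$ and $V_2$. Second, the source conditions pin down how $A$ meets $C$: a vertex $c\in C$ is a source of $A_1$, so $A_1$ has no arc into $c$ and no arc inside $C$; being a source of $A_2$ makes $c$ a sink of $\overleftarrow{A_2}$ (Proposition~\ref{prop:reverse}), so $\overleftarrow{A_2}$ has no arc out of $c$. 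Consequently every in-arc of a $C$-vertex in $A$ comes from $V_2$, every out-arc of a $C$-vertex goes to $V_1$, and there are no arcs inside $C$.

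To organize this I would introduce a level function $\ell$ with $\ell(v)=0,1,2$ for $v\in V_2,C,V_1$ respectively. A short case check on the possible endpoint types then shows every arc of $A$ is level-non-decreasing: arcs of $A_1$ are of type $V_1\to V_1$ or $C\to V_1$, and arcs of $\overleftarrow{A_2}$ are of type $V_2\to V_2$ or $V_2\to C$. For part (i), in- and out-degrees only ever ``add up'' at $C$, where $A_1$ contributes in-degree $0$ and $\overleftarrow{A_2}$ contributes out-degree $0$, so every vertex still has in- and out-degree at most one. Acyclicity is then immediate from the level function: any directed cycle would have to be level-constant and hence lie entirely within $V_1$, $C$, or $V_2$; the first and third are excluded because $A_1$ and $\overleftarrow{A_2}$ are themselves acyclic, and the middle is excluded because there are no arcs inside $C$. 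This establishes \ref{cond:fas1}.

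The main work is part (ii), and the crossing of the cut is where I expect the difficulty to sit. Suppose for contradiction that $A$ contains a chain twist, i.e.\ a cycle $Z=v_0v_1\dots v_{k-1}v_0$ satisfying \eqref{eq:chaintwist}. The one general consequence of \eqref{eq:chaintwist} I would extract is that \emph{no two consecutive edges of $Z$ are both ``non-forward''} (an edge $v_iv_{i+1}$ being non-forward means $(v_i,v_{i+1})\notin A$): if $v_iv_{i+1}$ is non-forward then \eqref{eq:chaintwist} forces $(v_{i-1},v_i)\in A$. I would then run a ``trapping'' argument to show $Z$ cannot meet both $V_1$ and $V_2$. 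Indeed, whenever $v_i\in V_1$ and $v_{i+1}\in C$, the edge $v_iv_{i+1}$ is automatically non-forward (there is no arc of $A$ from $V_1$ into $C$); \eqref{eq:chaintwist} then forces $(v_{i+1},v_{i+2})\in A$, and since every out-arc of a $C$-vertex lands in $V_1$, we get $v_{i+2}\in V_1$. Combined with the absence of $V_1$--$V_2$ edges, this shows that once $Z$ visits a vertex of $V_1$ it stays inside $V_1\cup C$ for the rest of its cyclic traversal, never reaching $V_2$.

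Consequently $Z$ lies entirely in $V_1\cup C$ or entirely in $V_2\cup C$. In the first case $Z$ is a cycle of $G_1$, and because the arcs of $A$ inside $V_1\cup C$ are exactly the arcs of $A_1$, the cycle $Z$ satisfies \eqref{eq:chaintwist} with respect to $A_1$; that is, $A_1$ contains a chain twist, contradicting that $A_1$ is a forcing arc set. The second case is symmetric, using that $\overleftarrow{A_2}$ is a forcing arc set of $G_2$ by Proposition~\ref{prop:reverse}. Either way we reach a contradiction, so $A$ contains no chain twist, and by Theorem~\ref{thm:chaintwist} it is a forcing arc set of $G$.
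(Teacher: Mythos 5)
Your proposal is correct and takes essentially the same approach as the paper: both reduce to Theorem~\ref{thm:chaintwist}, verify \ref{cond:fas1} from the source/sink structure of $C$ in $A_1$ and $\overleftarrow{A_2}$, and then rule out a chain twist by combining \eqref{eq:chaintwist} with the fact that arcs of $A$ enter $C$ only from $V_2$ and leave $C$ only into $V_1$, so that a chain twist can neither cross the cut nor lie entirely in $G_1$ or $G_2$. The differences are organizational only: the paper derives its contradiction at an explicit crossing index ($v_s\in C$, $v_{s+1}\in V_2$) after first excluding one-sided chain twists, whereas your trapping argument excludes crossing first; your level-function check of acyclicity also makes explicit a point the paper leaves implicit in its verification of \ref{cond:fas1}.
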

\begin{proof}
Since every vertex in $C$ is a source in $A_2$, every vertex in $C$ is a sink in $\overleftarrow{A_2}$. By Proposition~\ref{prop:reverse}, $\overleftarrow{A_2} $ is a forcing arc set. Now consider $A=A_1\cup \overleftarrow{A_2}$. Each vertex in $C$ is a source in $A_1$ and a sink in $\overleftarrow{A_2}$, so it has out-degree and in-degree both at most one in $A$. 
Since $C$ is a vertex cut, the in-degree and out-degree of any vertex of $V_1$ ({\em resp. }$V_2$) in $A$ is as it was in $A_1$ ({\em resp.} $A_2$).
Thus, \ref{cond:fas1} is satisfied. Now, by Theorem \ref{thm:chaintwist}, we need to show only that $A$ does not contain a chain twist \ref{cond:fas3}.

Suppose $A$ contains a chain twist $v_0v_1\dots v_{k-1}v_0$.
Note that a chain twist corresponds to a cycle in $G$. 
Since $A_1$ and $A_2$ are forcing arc sets,  no chain twist can consist only of vertices from $G_1$, or only from $G_2$.
Therefore, the corresponding cycle must contain vertices from both $G_1$ and $G_2$. Since $V_1$ and $V_2$ are separated by $C$ and the cycle must cross from $G_1$ to $G_2$ and back, the chain twist must contain at least two vertices of $C$. 

Suppose without loss of generality that the chain twist is indexed such that $v_0\in C$ and $v_1\in V_1$. Let $s>1$ be the least index so that $v_{s+1}\in V_2$; since $C$ separates $V_1$ from $V_2$ this implies that $v_s\in C$. 
%
We now argue that $(v_{s-1},v_s)\not \in A$. Assume the contrary, so $(v_{s-1},v_s)$ is in $A_1$ or in $\overleftarrow{A_2}$. By definition, $v_{s-1}\not\in V_2$. If $(v_{s-1},v_s)\in \overleftarrow{A_2}$, then $v_{s-1}v_s$ is part of $G_2=G[V_2\cup C]$, and thus $v_{s-1}\in C$. But then $v_{s-1}$ is in $C$ but not a sink in $\overleftarrow{A_2}$, which contradicts our assumption. If  $(v_{s-1},v_s)\in A_1$ then $v_s$ is in $C$ but not a source in $A_1$, which also contradicts our assumption. Therefore, $(v_{s-1},v_s)\not \in A$

By \eqref{eq:chaintwist}, we then have that $(v_s,v_{s+1})\in A$. 
Since $v_s\in C$, $v_{s+1}\in V_2$, and $v_sv_{s+1}$ is an edge of $G_2$, this implies that $(v_s,v_{s+1})\in \overleftarrow{A_2}$. Since $v_s\in C$, this contradicts the fact that each vertex in $C$ is a sink in $\overleftarrow{A_2}$. 
\end{proof}

\begin{corollary}
    Let $G=(V, E)$ be a graph with a vertex cut $C$. Let $V_1$ be the vertex set of one of the components of $G-C$, and let $V_2=V\setminus (V_1\cup C)$. Let $G_1=G[V_1\cup C]$ and $G_2=G[V_2\cup C]$. If there exist zero forcing sets $S_1$ in $G_1$ and $S_2$ in $G_2$ so that $C\subseteq S_1\cap S_2$, then there exists a zero forcing set for $G$ of size $|S_1|+|S_2|-|C|$.
\end{corollary}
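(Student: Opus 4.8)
The plan is to reduce the corollary to Proposition~\ref{prop:cut} by converting zero forcing sets into forcing arc sets of the right shape and then counting arcs via Proposition~\ref{prop:sizeBA}. First I would take the zero forcing set $S_1$ in $G_1$ and let $A_1$ be a forcing arc set of $G_1$ representing the forcing process started from $S_1$, so that the source set of $A_1$ is exactly $S_1$. The key requirement of Proposition~\ref{prop:cut} is that every vertex of $C$ be a \emph{source} of $A_1$ (and of $A_2$). Since $C\subseteq S_1$ and the sources of $A_1$ are precisely $S_1$, every vertex of $C$ is automatically a source of $A_1$; the same holds for $A_2$ built from $S_2$ in $G_2$. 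Thus the hypotheses of Proposition~\ref{prop:cut} are met, and $A:=A_1\cup\overleftarrow{A_2}$ is a forcing arc set of $G$.

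Next I would identify the sources of $A$, since by Definition~\ref{def:fas} the source set of a forcing arc set is a zero forcing set for $G$. The sources of $A_1$ are $S_1$, and the sources of $\overleftarrow{A_2}$ are the sinks of $A_2$. A vertex of $C$ is a source in $A_1$ and a sink in $\overleftarrow{A_2}$, so it has in-degree zero in $A$ and hence remains a source of $A$. A vertex of $V_1$ retains its $A_1$-status in $A$, and a vertex of $V_2$ retains its $\overleftarrow{A_2}$-status; so the source set of $A$ is $S_1\cup(S_2\setminus C)$, which is a zero forcing set of $G$ of size $|S_1|+|S_2|-|C|$, using $C\subseteq S_1\cap S_2$ to count the overlap.

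Alternatively, and perhaps more cleanly, I would sidestep analysing the sources of $A$ directly and instead count arcs. By Proposition~\ref{prop:sizeBA} applied in $G_1$ we get $|A_1|=|V_1\cup C|-|S_1|$, and in $G_2$ we get $|A_2|=|V_2\cup C|-|S_2|$. Reversal does not change cardinality, so $|\overleftarrow{A_2}|=|A_2|$, and since $A_1$ and $\overleftarrow{A_2}$ share no arcs (arcs of $A_1$ lie in $G_1$ with their single $C$-endpoint a source, while arcs of $\overleftarrow{A_2}$ incident to $C$ point \emph{into} $C$), we have $|A|=|A_1|+|A_2|$. Then Proposition~\ref{prop:sizeBA} applied to $G$ gives that the corresponding zero forcing set $B_0$ has size $|B_0|=|V|-|A|$, and substituting $|V|=|V_1|+|V_2|+|C|$ and the two arc-count formulas yields $|B_0|=|S_1|+|S_2|-|C|$ after simplification.

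The main obstacle is the bookkeeping that confirms $A_1$ and $\overleftarrow{A_2}$ are genuinely arc-disjoint and that no $C$-vertex acquires an in-arc in $A$; this is exactly the structural content already verified inside the proof of Proposition~\ref{prop:cut} (each $C$-vertex is a source of $A_1$ and a sink of $\overleftarrow{A_2}$, hence has in-degree zero and out-degree at most one in $A$), so I would simply invoke that proposition rather than re-derive it. The only genuinely new points are (i) using $C\subseteq S_1$ to guarantee the source condition, and (ii) the short arithmetic that turns the arc count into the size bound $|S_1|+|S_2|-|C|$.
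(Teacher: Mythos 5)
Your second, arc-counting route is exactly the paper's proof: take $A_1=\fas(S_1,G_1)$ and $A_2=\fas(S_2,G_2)$, observe that $C\subseteq S_1\cap S_2$ makes every vertex of $C$ a source of both (so Proposition~\ref{prop:cut} applies to give the forcing arc set $A=A_1\cup\overleftarrow{A_2}$ of $G$), and then compute, via Proposition~\ref{prop:sizeBA},
\begin{equation*}
|V|-|A| \;=\; |V_1\cup C|+|V_2\cup C|-|C|-\bigl(|A_1|+|A_2|\bigr) \;=\; |S_1|+|S_2|-|C|.
\end{equation*}
That argument is correct and complete, and your points (i) and (ii) are precisely the two things the paper's proof adds on top of Proposition~\ref{prop:cut}.

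Your first route, however, contains a genuine error. You claim that a vertex $v\in C$, being a source of $A_1$ and a sink of $\overleftarrow{A_2}$, ``has in-degree zero in $A$ and hence remains a source of $A$.'' Being a sink of $\overleftarrow{A_2}$ only says that $v$ has \emph{out-degree} zero there; it says nothing about its in-degree. If $v$ forces some vertex during the $G_2$-process (out-degree one in $A_2$), then $v$ has in-degree one in $\overleftarrow{A_2}$, hence in $A$, and is \emph{not} a source of $A$. Concretely, let $G_2$ be the path $c\,x\,y$ with $C=\{c\}$ and $S_2=\{c\}$: then $A_2=\{(c,x),(x,y)\}$, $\overleftarrow{A_2}=\{(y,x),(x,c)\}$, and $c$ acquires the in-arc $(x,c)$. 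Consequently the source set of $A$ is not $S_1\cup(S_2\setminus C)$; it is $(S_1\setminus C)\cup\sinks(A_2,V_2\cup C)$ --- each $C$-vertex is traded for the sink of its reversed $G_2$-chain. The cardinality still comes out to $|S_1|+|S_2|-|C|$, since the number of sinks of $A_2$ equals the number of its sources, namely $|S_2|$, and $S_1\setminus C\subseteq V_1$ is disjoint from $V_2\cup C$; so this route can be repaired, but as written the identification of the sources is false, and it is exactly the kind of slip that the arc-counting argument (yours and the paper's) is designed to avoid.
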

\begin{proof}
Note first that $|V|=|V_1|+|V_2|+|C|$.
Let $A_1$ and $A_2$ be the forcing arc sets in $G_1$ and $G_2$ corresponding to the zero forcing process on $G_1$ and $G_2$ with the initial blue sets $S_1$ and $S_2$, respectively. By Proposition \ref{prop:sizeBA}, $|A_1|=|V_1\cup C|-|S_1|$ and  $|A_2|=|V_2\cup C|-|S_2|$. By Proposition \ref{prop:cut}, $A=A_1\cup \overleftarrow{A_2}$ is a forcing arc set of $G$. The set of sources of $A$ is a zero forcing set for $G$ of size $|V|-|A|$, where
\begin{equation*}\begin{split}
    |V|-|A| & = |V_1\cup C| + |V_2\cup C|- |C| - (|A_1|+|A_2|)\\
    & =|S_1|+|S_2|-|C|.
\end{split}\end{equation*}
\end{proof}

Proposition~\ref{prop:cut} also gives a bound on the zero forcing number derived from a vertex cut.
The special case of graphs containing a cut vertex (\ie~$|C|=1$ in the statement of the theorem below) is given in \cite[Theorem 3.8]{row2012technique}.
\begin{proposition}\label{prop:cutbound}
Let $G=(V, E)$ be a graph with a vertex cut $C$. 
Let $V_1$ be the vertex set of one of the components of $G-C$, and let $V_2=V\setminus (V_1\cup C)$. Let $G_1=G[V_1\cup C]$ and $G_2=G[V_2\cup C]$. We have that
\begin{equation*}
    Z(G_1)+Z(G_2)-|C|\leq Z(G)\leq Z(G_1)+Z(G_2)+|C|.
\end{equation*}
\end{proposition}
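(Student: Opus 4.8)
The plan is to prove the two inequalities separately, in both cases exploiting the correspondence between zero forcing sets and forcing arc sets together with the arc-counting identity $|A|=|V|-|B_0|$ of Proposition~\ref{prop:sizeBA}. For the upper bound I would not build anything new: the corollary immediately preceding this proposition already does the work. Let $S_1$ and $S_2$ be optimal zero forcing sets for $G_1$ and $G_2$, so $|S_i|=Z(G_i)$. Since any superset of a zero forcing set is again a zero forcing set, $S_1\cup C$ and $S_2\cup C$ are zero forcing sets for $G_1$ and $G_2$ that both contain $C$. Applying the preceding corollary to them yields a zero forcing set for $G$ of size $|S_1\cup C|+|S_2\cup C|-|C|\le (Z(G_1)+|C|)+(Z(G_2)+|C|)-|C|=Z(G_1)+Z(G_2)+|C|$, giving $Z(G)\le Z(G_1)+Z(G_2)+|C|$.

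For the lower bound I would go in the opposite direction: start from an optimal zero forcing set of $G$ and split the corresponding forcing arc set across the cut. Let $A$ be a forcing arc set of $G$ whose sources form an optimal zero forcing set, so by Proposition~\ref{prop:sizeBA} we have $|A|=|V|-Z(G)$. Because $C$ is a vertex cut, $G$ has no edge with one end in $V_1$ and the other in $V_2$; hence every arc of $A$ lies inside $G_1=G[V_1\cup C]$ or inside $G_2=G[V_2\cup C]$. Let $A_1$ and $A_2$ be the sets of arcs of $A$ lying in $G_1$ and $G_2$, respectively. Then $A=A_1\cup A_2$, and the only arcs counted twice are those with both ends in $C$, so $|A_1|+|A_2|\ge |A|$.

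The key step is to argue that each $A_i$ is a forcing arc set of the induced subgraph $G_i$. Since $A_i\subseteq A$, condition \ref{cond:fas1} is inherited (a sub-digraph of a disjoint union of dipaths is again such a union), so by Theorem~\ref{thm:chaintwist} it suffices to check that $A_i$ contains no chain twist in $G_i$. A chain twist in $A_i$ would be a cycle in the induced subgraph $G_i$, hence a cycle in $G$, and since every edge of this cycle lies in $G_i$, membership of its arcs in $A_i$ agrees with membership in $A$; thus such a chain twist would already be a chain twist of $A$ in $G$, contradicting that $A$ is a forcing arc set. Therefore $A_i$ is a forcing arc set of $G_i$, and Corollary~\ref{cor:sizeBA} gives $Z(G_i)\le |V(G_i)|-|A_i|$. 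Summing over $i$ and using $|V(G_1)|+|V(G_2)|=|V|+|C|$ together with $|A_1|+|A_2|\ge |A|=|V|-Z(G)$ yields
\[
Z(G_1)+Z(G_2)\le (|V|+|C|)-|A|=|C|+Z(G),
\]
which is exactly the lower bound.

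I expect the main obstacle to be the verification in the previous paragraph that restricting a forcing arc set of $G$ to an induced subgraph $G_i$ produces a forcing arc set of $G_i$: the chain-twist condition is defined relative to the ambient graph and arc set, so one must confirm that passing to the induced subgraph neither creates a new chain twist nor alters the relevant arc memberships. Everything else is bookkeeping with the counting identity $|A|=|V|-Z(G)$ and the set equalities $|V(G_1)|+|V(G_2)|=|V|+|C|$ and $A=A_1\cup A_2$.
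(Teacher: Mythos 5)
Your proof is correct, and its overall architecture is the same as the paper's: both bounds come from moving forcing arc sets across the cut and counting with Proposition~\ref{prop:sizeBA} and Corollary~\ref{cor:sizeBA}. The differences are implementation-level, and both work in your favour. For the upper bound, the paper manipulates optimal forcing arc sets $A_1,A_2$ of $G_1,G_2$ directly, deleting the at most $|C|$ arcs per side incident to $C$ so that the cut vertices become sources, and then applies Proposition~\ref{prop:cut}; you instead enlarge optimal zero forcing sets to $S_i\cup C$ (using monotonicity of zero forcing under supersets, a standard fact the paper never states but also relies on, e.g.\ in Lemma~\ref{lemma: s}) and invoke the corollary preceding Proposition~\ref{prop:cutbound}. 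The two routes are equivalent, since that corollary is itself proved from Proposition~\ref{prop:cut}, but yours is shorter and incidentally sidesteps a small slip in the paper, which says to remove arcs with a \emph{tail} in $C$ when it is the arcs with a \emph{head} in $C$ that must be removed to make the cut vertices sources. For the lower bound your argument is the paper's argument, except that you explicitly verify the one delicate point: that restricting $A$ to the induced subgraph $G_i$ yields a forcing arc set \emph{of $G_i$}, not merely of $G$. The paper covers this by citing its subset lemma (Lemma~\ref{lem:subset}), whose statement only concerns subsets within the same ambient graph; your check---that a cycle of $G_i$ satisfying \eqref{eq:chaintwist} with respect to $A_i$ would satisfy it with respect to $A$ in $G$, because $G_i$ is induced and arc membership agrees---is exactly what is needed to make that step airtight.
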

\begin{proof}
We first prove the upper bound. Let $A_1$ and $A_2$ be optimal forcing arc sets in $G_1$ and $G_2$, respectively. That is, $|A_1|=|V_1|+|C|-Z(G_1)$ and $ |A_2|=|V_2|+|C|-Z(G_2) $. Remove any arc from $A_1$ and $A_2$ with a tail in $C$. By Lemma~\ref{lem:subset}, the remaining arc sets $A_1'$ and $A_2'$ are still forcing arc sets. At most $|C|$ arcs are removed from each of $A_1$ and $A_2$, since $A_1$ and $A_2$ form collections of dipaths. Therefore, $|A_1'|\geq |A_1|-|C|$ and $|A_2'|\geq |A_2|-|C|$.

By construction, all vertices in $C$ are sources in $A_1'$ and $A_2'$. Applying Proposition~\ref{prop:cut}, we obtain that $A=A_1'\cup \overleftarrow{A_2'}$ is a forcing arc set of $G$. 
Now, 
\begin{eqnarray*}
    Z(G)&= &  |V|-|A|\\
    &=&|V|-(|A_1'|+|A_2'|)\\
    &\leq & |V|-(|A_1|+|A_2|-2|C|)\\
    & = & (|V_1|+|V_2|+|C|)-(|V_1|-Z(G_1)+|V_2|-Z(G_2))\\
    &=& Z(G_1)+Z(G_2)+|C|.
\end{eqnarray*}

We then prove the lower bound. Let $A$ be an optimal forcing arc set of $G$, so $|A|=|V|-Z(G)$. Let $A_1$ and $A_2$ be the subsets of $A$ restricted to $G_1$ and $G_2$, respectively. (Note that $A_1$ and $A_2$ are not necessarily disjoint.)
Any subset of a forcing arc set is again a forcing arc set, so  $A_1$ and $A_2$ are forcing arc sets in $G_1$ and $G_2$, respectively. By Corollary~\ref{cor:sizeBA}, $|A_1|\leq |V_1|+| C|-Z(G_1)$ and $|A_2|\leq |V_2|+|C|-Z(G_2)$. Now,
\begin{eqnarray*}
    Z(G)&= &|V|-|A|\\
    & \geq & |V_1|+|V_2|+|C|-|A_1|-|A_2|\\
    &\geq & Z(G_1)+Z(G_2)-|C|.
\end{eqnarray*}
This completes the proof.
\end{proof}

\section{The approximation algorithm}\label{sec:approxalg}
We introduce our approximation algorithm for computing a zero forcing set in Algorithm~\ref{algorithm}. Before that, we will first review a few graph parameters that motivate our algorithm. 
A \emph{fort} in a graph $G=(V, E)$ is a set $F\subset V$ so that no vertex in $V\setminus F$ has exactly one neighbour in $G$. Thus, even if all vertices in $V\setminus F$ are blue, $F$ will remain white. Any zero forcing set must contain at least one vertex in each fort. 
Thus, $Z(G)$ is bounded below by the size of the largest \emph{fort packing} - a collection of mutually disjoint forts. 
Formally, the {\em fort number} is the largest size of a fort packing of $G$: 
$$\ft(G) = \max\{|\calF|: \calF \text{ is a fort packing}\}.$$
\begin{theorem}[\cite{fast2018effects}] 
Every zero forcing set intersects every fort; that is, $\ft(G)\leq Z(G)$. 
\end{theorem}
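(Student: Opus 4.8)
The statement bundles two claims: first, that any zero forcing set meets every fort, and second, the numerical bound $\ft(G)\le Z(G)$. The plan is to prove the intersection claim directly and then read off the bound by a disjointness count. So first I would fix an arbitrary zero forcing set $B_0$ and an arbitrary fort $F$, and argue by contradiction that $B_0\cap F=\emptyset$ is impossible.

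The heart of the argument is to track the \emph{first} moment of the zero forcing process at which a vertex of $F$ turns blue. Since $B_0$ is a zero forcing set, every vertex of $G$, and in particular every vertex of $F$, is eventually coloured blue. If $B_0\cap F=\emptyset$, then all of $F$ is white at the start, so there is a well-defined first step in which some $v\in F$ is forced blue by some vertex $u$. I would then extract two facts about this step: (i) $u\notin F$, because $u$ must be blue in order to force, whereas every vertex of $F$ is still white just before $v$ changes colour; and (ii) $v$ is the unique white neighbour of $u$, by the colour change rule. Combining these, since immediately before this step every vertex of $F$ is white, any neighbour of $u$ lying in $F$ is white and hence must equal $v$; thus $u$ has exactly one neighbour in $F$. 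As $u\in V\setminus F$, this contradicts the defining property of a fort. Hence no vertex of $F$ can ever be forced, contradicting that $B_0$ colours all of $V$ blue, and so $B_0\cap F\neq\emptyset$.

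With the intersection claim established, the bound follows by a short counting step. Take a fort packing $\calF=\{F_1,\dots,F_t\}$ of pairwise disjoint forts realising $t=\ft(G)$, and a minimum zero forcing set $B_0$ with $|B_0|=Z(G)$. By the first part, $B_0$ meets each $F_i$; choosing one vertex of $B_0$ from each $F_i$ and using that the $F_i$ are pairwise disjoint produces $t$ distinct vertices of $B_0$, whence $Z(G)=|B_0|\ge t=\ft(G)$.

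I expect the only genuinely delicate point to be steps (i)--(ii): pinning down that the forcing vertex $u$ lies outside $F$, and that the uniqueness of its white neighbour forces $u$ to have \emph{exactly} one neighbour in $F$ (rather than merely at least one). The subtlety is that the colour change rule constrains white neighbours over all of $V$, while the fort condition constrains neighbours inside $F$; the bridge is precisely the observation that, at this first step, $F$ is entirely white, so ``white neighbour of $u$'' and ``neighbour of $u$ in $F$'' coincide on $F$. Everything else is routine.
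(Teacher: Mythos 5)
Your proposal is correct and matches the argument the paper itself relies on (the paper only sketches it and cites \cite{fast2018effects}): no vertex outside a fort can perform the first force into the fort, since at that moment the fort is entirely white and the forcing vertex would need exactly one neighbour in it, so every zero forcing set meets every fort, and the bound $\ft(G)\leq Z(G)$ follows by counting one vertex per fort in a maximum fort packing. Your ``first moment a vertex of $F$ turns blue'' formalization is precisely the rigorous version of the paper's one-line justification, so there is nothing to add.
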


Our algorithm assumes that a {\sl path decomposition} of the graph is given. 
First defined in \cite{robertson1986graph}, a path decomposition is a collection of subsets of vertices, often called {\sl bags}, with special properties. 
We will, in fact, use a special type of path decomposition, which we now define formally.

\begin{definition}
    A \emph{nice path decomposition} of a graph $G=(V,E)$ is a collection of vertex subsets $X_0,X_1,\dots X_k, X_{k+1}\subset V$ so that
    \begin{enumerate}[label=(D\arabic*)]
        \item\label{cond:pd1} For each edge $uv\in E$, there is some $1\leq i\leq k$ so that $\{u,v\}\subseteq X_i$.
        \item\label{cond:pd2} For each $v\in V$ and $1\leq i<j\leq k$, if $v\in X_i\cap X_j$, then $v\in X_s$ for all $i\leq s\leq j$.
        \item\label{cond: spd-start-end} $X_0=X_{k+1}=\emptyset$ and all other sets $X_i$, $1\leq i\leq k$ are non-empty.
        \item\label{cond: spd-set difference} For all $0\leq i\leq k$, either $X_{i+1}=X_i\cup \{v\}$ for some $v\in V\setminus X_i$, or $X_{i+1}=X_i\setminus \{v\}$ for some $v\in X_i$.
    \end{enumerate}
The definition of an ordinary path decomposition requires only \ref{cond:pd1} and \ref{cond:pd2}.
The {\em width} of a path decomposition is the integer $\max\{|X_i| - 1:1\leq i\leq k\}$. 
The {\em pathwidth} $w = \pw(G)$ of a graph $G$ is the least integer $w$ such that there is a path decomposition of $G$ of width $w$. 
A path decomposition is {\em optimal} if its width is $\pw(G)$.
We call a set $X_i$ an \emph{end set} if either $\bigcup_{j=1}^{i-1}X_j\subseteq X_i$ or $\bigcup_{j=i+1}^k X_j\subseteq X_i$.
\end{definition}

We will write a nice path decomposition $X_0,X_1,\dots, X_{k+1}$ as $\X_k$ for simplicity, where we assume the inclusion of $X_0= X_{k+1} = \emptyset$, and $k$ refers to the number of non-empty bags.

Nice path decompositions were first introduced in \cite{1994pathwidth} as a special type of {\sl nice tree decomposition}. 
Since path decompositions are also tree decompositions, their results apply here too. 
The following two lemmas, taken from \cite{1994pathwidth}, give an upper bound on the number of bags of a nice path decomposition and the complexity of transforming from a path decomposition into a nice path decomposition.
\begin{lemma}
    [{\cite[Lemma 13.1.2]{1994pathwidth}}] 
    Every graph $G$ of order $n$ and pathwidth $w$ has a nice path decomposition of width $w$. 
    Furthermore, there exists a nice path decomposition $\X_k$ with $k\leq 4|V|$.
\end{lemma}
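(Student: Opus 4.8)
The plan is to start from an arbitrary (ordinary) path decomposition of $G$ of width $w$, which exists by the definition of pathwidth, and to transform it into a nice one by inserting intermediate bags that differ from their neighbours by a single vertex. Write the given decomposition as $Y_1,\dots,Y_r$, satisfying \ref{cond:pd1} and \ref{cond:pd2}. First I would describe the insertion procedure: between each consecutive pair $Y_i,Y_{i+1}$, insert a block of bags that first \emph{forgets} the vertices of $Y_i\setminus Y_{i+1}$ one at a time, and then \emph{introduces} the vertices of $Y_{i+1}\setminus Y_i$ one at a time; prepend a block that introduces the vertices of $Y_1$ starting from $\emptyset$, and append a block that forgets the vertices of $Y_r$ down to $\emptyset$. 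Setting $X_0=X_{k+1}=\emptyset$ then yields a candidate sequence in which every consecutive pair differs by exactly one vertex, so \ref{cond: spd-start-end} and \ref{cond: spd-set difference} hold by construction.

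Next I would verify \ref{cond:pd1} and \ref{cond:pd2}. Condition \ref{cond:pd1} is immediate, since each original bag $Y_i$ still appears in the new sequence and hence every edge remains covered. The subtle point, and the step I expect to be the main obstacle, is \ref{cond:pd2}: the requirement that each vertex occupies a contiguous interval of bags. This could fail only if some vertex were forgotten and later reintroduced. However, the procedure forgets a vertex $v$ at the transition from $Y_i$ to $Y_{i+1}$ only when $v\in Y_i\setminus Y_{i+1}$, and introduces $v$ only when $v\in Y_{i+1}\setminus Y_i$. By \ref{cond:pd2} for the original decomposition, the set of indices $i$ with $v\in Y_i$ is already an interval; hence once $v$ leaves it never returns, and once it is introduced it was not present earlier. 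This guarantees that each $v$ is introduced exactly once and forgotten exactly once, so its presence in the new sequence remains contiguous. The width is unchanged, since every newly inserted bag is a subset of an original bag $Y_i$.

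Finally I would bound the number $k$ of non-empty bags. Since the sequence runs from $X_0=\emptyset$ to $X_{k+1}=\emptyset$ with each step adding or removing a single vertex, the numbers of introduce and forget steps are equal, and by the argument above each vertex that appears accounts for exactly one introduce and one forget step. Hence there are at most $2|V|$ transitions, giving $k+1\le 2|V|$ and in particular $k\le 4|V|$ as claimed; the stated bound leaves ample room, and holds even after the optional preliminary normalization of deleting any bag contained in a neighbouring bag (which only shortens the sequence while preserving \ref{cond:pd1}, \ref{cond:pd2}, and the width).
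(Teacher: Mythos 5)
The paper gives no proof of this lemma at all --- it is quoted from Kloks' book --- and your insertion construction is exactly the standard argument for it, so the route is the right one. Your handling of the two points you identify as delicate is sound: contiguity (D2) follows from the interval property of the original decomposition just as you say, and your counting (one introduce and one forget step per vertex that ever appears) is correct and in fact gives the stronger bound $k\le 2|V|-1$.

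There are, however, two conditions you assert ``hold by construction'' without actually checking them. First, (D3) does not only say $X_0=X_{k+1}=\emptyset$; it also requires every \emph{interior} bag to be non-empty. Your block between $Y_i$ and $Y_{i+1}$ passes through the bag $Y_i\cap Y_{i+1}$, which can be empty: an ordinary path decomposition of a disconnected graph may well have disjoint consecutive bags (and the lemma is stated for every graph $G$). To close this you must either assume $G$ connected and prove that consecutive bags then intersect --- if $Y_i\cap Y_{i+1}=\emptyset$, then by (D2) the vertices of $\bigcup_{j\le i}Y_j$ and $\bigcup_{j>i}Y_j$ are disjoint, and by (D1) no edge joins them, contradicting connectivity --- or repair the block for disconnected graphs, e.g.\ by introducing one vertex of $Y_{i+1}$ before forgetting the last vertex of $Y_i$ (at the cost of a case analysis on the width). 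Second, the normalization you describe as ``optional'' is in fact mandatory for (D4): an ordinary path decomposition may contain consecutive \emph{equal} bags $Y_i=Y_{i+1}$, in which case your block inserts nothing and the final sequence has two identical consecutive bags, whereas (D4) requires each consecutive pair to differ by exactly one vertex. Deleting such duplicates first (as you suggest) fixes this, but it must be part of the proof, not an aside. With those two repairs, and restricting to connected graphs as the paper implicitly does, your proof is complete.
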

\begin{lemma}[{\cite[Lemma 13.1.3]{1994pathwidth}}]\label{lem:smooth_pw}
    Let $G$ be a graph of order $n$. For constant $w$, given a path decomposition of a graph $G$ of width $w$ with at most $k\leq 4n$ bags, one can find a nice path decomposition of $G$ of width $w$ and with at most $\ell\leq 4n$ bags in $O(n)$ time.
\end{lemma}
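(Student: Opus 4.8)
The plan is to start from the given ordinary path decomposition $Y_1,Y_2,\dots,Y_k$ (satisfying only \ref{cond:pd1} and \ref{cond:pd2}) and massage it into a nice one in stages, keeping the width fixed while controlling both the number of bags and the running time. Since $w$ is constant, every bag has at most $w+1=O(1)$ vertices, so I may treat union, intersection, and subset tests on bags as $O(1)$ operations. The first stage is a \emph{cleanup}: I would delete every \emph{redundant} bag, that is, a bag $Y_i$ with $Y_i\subseteq Y_{i-1}$ or $Y_i\subseteq Y_{i+1}$. Deleting such a bag preserves \ref{cond:pd1}, because any edge it covered is covered by the containing neighbour, and preserves \ref{cond:pd2}, because removing a bag cannot break the contiguity of any vertex's interval. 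A single left-to-right pass (managed with a stack so that neighbours are recompared after a deletion) removes all redundant bags in $O(k)=O(n)$ time, after which every consecutive pair satisfies $Y_i\setminus Y_{i+1}\neq\emptyset$ and $Y_{i+1}\setminus Y_i\neq\emptyset$.

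The second stage enforces \ref{cond: spd-set difference} by \emph{interpolation}. Between each consecutive pair $Y_i,Y_{i+1}$ I would insert a chain of bags that first forgets the vertices of $Y_i\setminus Y_{i+1}$ one at a time, reaching $Y_i\cap Y_{i+1}$, and then introduces the vertices of $Y_{i+1}\setminus Y_i$ one at a time, reaching $Y_{i+1}$. Each inserted bag is a subset of $Y_i$ or of $Y_{i+1}$, so the width never exceeds $w$; the original bags remain in place, so \ref{cond:pd1} is untouched; and since each forgotten (respectively introduced) vertex leaves (respectively enters) exactly once across the whole sequence, \ref{cond:pd2} is preserved. Prepending a chain that introduces the vertices of the first bag one at a time from $\emptyset$, and appending a chain that forgets the vertices of the last bag down to $\emptyset$, establishes \ref{cond: spd-start-end} at the two ends and makes every transition a single add or remove, which is precisely \ref{cond: spd-set difference}.

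The third stage handles the nonemptiness clause of \ref{cond: spd-start-end} together with the counting and timing. An interior interpolated bag contains $Y_i\cap Y_{i+1}$, so it can be empty only when $Y_i\cap Y_{i+1}=\emptyset$; but by \ref{cond:pd2} and \ref{cond:pd1} an empty intersection forces every edge to lie entirely within $\bigcup_{j\le i}Y_j$ or entirely within $\bigcup_{j>i}Y_j$, so $G$ is disconnected there. Hence for connected $G$ no interior bag is empty, and in general one may assume connectivity without loss of generality or run the construction on each component and concatenate. For the counts, \ref{cond:pd2} forces each of the $n$ vertices to be introduced exactly once and forgotten exactly once, so the final sequence has exactly $n$ introduce-steps and $n$ forget-steps, giving $2n+1$ bags of which only the two ends are empty; thus the number of nonempty bags is $\ell=2n-1\le 4n$. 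Because consecutive bags differ by one element, the entire nice decomposition can be emitted as a list of introduce/forget operations in $O(n)$ time for constant $w$, which gives the stated complexity.

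I would expect the main obstacle to be this third stage: reconciling the single-vertex-change requirement \ref{cond: spd-set difference} with the prohibition on interior empty bags in \ref{cond: spd-start-end} is exactly the point at which the connectivity assumption (or componentwise processing) must be invoked. Everything else — preservation of width, of edge coverage \ref{cond:pd1}, and of the interval property \ref{cond:pd2}, together with the $\ell\le 4n$ bound and the $O(n)$ running time — follows routinely once the three-stage construction is fixed.
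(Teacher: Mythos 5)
The paper never proves this lemma itself --- it is imported verbatim from Kloks' monograph \cite{1994pathwidth} --- so there is no in-paper argument to compare against; what you have written is the standard construction, and for connected graphs it is correct and complete. Your cleanup pass preserves \ref{cond:pd1} and \ref{cond:pd2}; the interpolated bags are subsets of retained original bags, so the width stays exactly $w$, and the down-then-up monotonicity of each interpolation chain keeps every vertex's bag-interval contiguous; the interval property then forces exactly $n$ introduce transitions and $n$ forget transitions, giving $2n+1$ bags of which $2n-1$ are nonempty, well inside the $4n$ bound; and since $w$ is constant, every bag operation is $O(1)$, so the total running time is $O(k)+O(n)=O(n)$.

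The one genuine soft spot is exactly where you flagged it: the disconnected case, where ``run the construction on each component and concatenate'' fails as stated. If you concatenate the component decompositions together with their empty end bags, you create interior empty bags and violate \ref{cond: spd-start-end}; if instead you drop those empty bags, the junction between the last nonempty bag $\{a\}$ of one component and the first nonempty bag $\{b\}$ of the next is not a single-vertex change, violating \ref{cond: spd-set difference}. For $w\ge 1$ this is repaired by inserting the bridging bag $\{a,b\}$ at each junction (width stays at most $w$, \ref{cond:pd2} is preserved since each of $a,b$ just extends its interval by one bag, and only $c-1$ bags are added for $c$ components, keeping $\ell\le 4n$). But for $w=0$ no repair exists: an edgeless graph on two vertices has pathwidth $0$ yet admits no width-$0$ nice path decomposition under the paper's definition, since \ref{cond: spd-set difference} would require two consecutive distinct singletons to differ by one vertex. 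So connectivity is not a ``without loss of generality'' assumption here; it is a genuine hypothesis --- which is presumably why the paper elsewhere restricts to connected graphs. With that hypothesis made explicit (or with the bridging-bag fix when $w\ge 1$), your proof is correct.
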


Our algorithm will simultaneously construct a zero forcing set $S$ and a fort packing $\F$ of a graph $G$.
The algorithm uses a nice path decomposition of the graph as its input. 
Let $w$ denote the width of the path decomposition; an analysis of the algorithm will show that $|S|\leq (w+1)|\F|$. 
Thus, if we assume that the path decomposition is optimal, that is, $w=\pw(G)$, then $|S|\leq (\pw(G)+1)|\F|$.

Our algorithm is based on repeated application of Proposition \ref{prop:cut}. The application depends on the following lemma.
The lemma can be considered as a corollary of Statement (2.4) in \cite{robertson1986graph} by the fact that a nice path decomposition is a tree decomposition. 

\begin{lemma}[\cite{robertson1986graph}]\label{lemma:bag-cut}
Let $\X_k$ be a nice path decomposition. For each $1\leq  t\leq k$, either $X_t$ is a vertex cut in $G$, or it is an end set. 
\end{lemma}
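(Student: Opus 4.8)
The statement asserts that in a nice path decomposition $\X_k$, each bag $X_t$ is either a vertex cut or an end set. The plan is to prove the contrapositive: if $X_t$ is not an end set, then $X_t$ is a vertex cut. I would use the structural property \ref{cond:pd2} of path decompositions — that the bags containing any fixed vertex form a contiguous interval of indices — to partition the ``outside'' vertices according to which side of index $t$ they first appear on. First I would define $L = \bigcup_{j=1}^{t-1} X_j \setminus X_t$ (vertices appearing strictly before $X_t$ but not in it) and $R = \bigcup_{j=t+1}^{k} X_j \setminus X_t$ (vertices appearing strictly after $X_t$ but not in it). Since $X_t$ is assumed not to be an end set, neither $\bigcup_{j=1}^{t-1}X_j \subseteq X_t$ nor $\bigcup_{j=t+1}^{k}X_j \subseteq X_t$ holds, so both $L$ and $R$ are nonempty.

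\textbf{Key steps.} The crucial claim is that $L$ and $R$ are disjoint, and that there is no edge of $G$ between a vertex of $L$ and a vertex of $R$. Disjointness follows from \ref{cond:pd2}: if some vertex $v$ lay in both $L$ and $R$, it would appear in some bag $X_i$ with $i < t$ and some bag $X_j$ with $j > t$, forcing $v \in X_t$ by the interval property, contradicting $v \notin X_t$. For the no-edge claim, suppose toward a contradiction that $uv \in E$ with $u \in L$ and $v \in R$. By \ref{cond:pd1} there is a bag $X_i$ containing both $u$ and $v$. Now $u \in L$ means $u$ appears only in bags with index $< t$ (its interval of occurrence lies entirely to the left of $t$, since $u \notin X_t$ and the interval is contiguous), and similarly $v$ appears only in bags with index $> t$. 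These two index ranges are disjoint, so no single bag $X_i$ can contain both $u$ and $v$ — a contradiction. Hence every path in $G$ from $L$ to $R$ must pass through $X_t$, which shows $X_t$ separates the nonempty sets $L$ and $R$ and is therefore a vertex cut.

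\textbf{Main obstacle.} The main subtlety, and the step I would be most careful about, is pinning down exactly that each vertex's set of occurrence indices is a contiguous interval lying entirely on one side of $t$. This is where \ref{cond:pd2} does all the work, and it must be applied cleanly: a vertex $u \notin X_t$ cannot have occurrences on both sides of index $t$, for otherwise the contiguity forces $u \in X_t$. Once this interval structure is established, the disjointness of $L$ and $R$ and the absence of $L$–$R$ edges both follow immediately, and the conclusion that $X_t$ is a vertex cut is routine. A minor point to handle is confirming that $L$ and $R$ are genuinely nonempty under the assumption that $X_t$ is not an end set, which follows directly by negating the two end-set conditions; this guarantees the cut is nontrivial (both sides contain vertices), so $X_t$ is a legitimate vertex cut rather than a vacuous separator.
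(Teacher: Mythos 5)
Your proof is correct, but it follows a genuinely different route from the paper: the paper does not prove this lemma at all, instead citing it as a corollary of Statement~(2.4) of \cite{robertson1986graph}, using the fact that a nice path decomposition is in particular a tree decomposition. Your argument is the standard direct one from the axioms: define $L$ (vertices occurring only in bags with index $<t$) and $R$ (vertices occurring only in bags with index $>t$), use \ref{cond:pd2} to show these sets are disjoint and that each vertex outside $X_t$ has its interval of occurrence entirely on one side of $t$, use \ref{cond:pd1} to rule out edges between $L$ and $R$, and get non-emptiness of both sets by negating the end-set condition. The one step you leave implicit, and should state, is why a path from $L$ to $R$ in $G-X_t$ is impossible: every vertex on such a path is incident to an edge, hence by \ref{cond:pd1} occurs in some bag, hence by the interval argument lies in $L\cup R$; so the path would have to contain an $L$--$R$ edge, which you have excluded. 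This is routine, not a gap. Comparing the two approaches: yours buys self-containedness, and in fact proves the statement for \emph{arbitrary} path decompositions, since the niceness conditions \ref{cond: spd-start-end} and \ref{cond: spd-set difference} are never used; the paper's citation buys brevity and inherits the greater generality of the Robertson--Seymour statement, which applies to tree decompositions as well and is the form needed if one later wants to adapt the algorithm to tree decompositions (as the paper's concluding problems suggest).
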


We first give a brief intuition of the algorithm. 
Consider a graph $G=(V, E)$ and assume that a nice path decomposition $\X_k$
of $G$ is given.  For $0\leq i\leq j\leq k+1$, let 
\begin{equation*}
    X_i^{j}=\bigcup_{\tau=i}^{j}\,X_\tau\quad\text{ and }\quad G(i,j)=G[X_i^{j}].
\end{equation*} 
The algorithm iteratively builds a fort packing and a zero forcing set for $G(0,t)$, where $0\leq t\leq k+1$. This zero forcing set will contain $X_t$. We will also maintain the corresponding forcing arc set $A$; each vertex in $X_t$ will be a source of this set. To extend the zero forcing set and the fort packing, a second index $z>t$ is found so that $X_t\cup X_{z-1}$ is a zero forcing set for $G(t,z)$ but $X_t\cup X_z$ is not. By Lemma~\ref{lemma:bag-cut}, $X_t$ is a vertex cut in $G(0,z)$, and since $X_t$ is part of the zero forcing set for both components, we may apply Proposition~\ref{prop:cut}. (There are some complications when $X_t\cap X_{z-1}\neq\emptyset$, which we ignore here but will address in the formal proof.) 

Precisely, we reverse the arc set $A$ in $G(0,t)$ and take as our new $S$ the set of sources of the reversed arc set, augmented by adding $X_{z-1}$. The fort packing $\F$ is augmented by adding the set of white vertices remaining when the zero forcing process is performed on $G(t,z)$ with the initial set $X_t\cup X_z$. Thus, at most $w+1$ vertices are added to $S$, and one fort is added to $\F$ in each iteration.

We define the notations that we will use below and state our algorithm in Algorithm~\ref{algorithm}.
Given a forcing arc set $A$, let $\sources(A, V)$ and $\sinks(A, V)$ be the set of sources and sinks of $H=(V, A)$, respectively. From Proposition~\ref{prop:reverse}, we know that both $\sources(A, V)$ and $\sinks(A, V)$ are zero forcing sets. 

Given a set $S\subseteq V$, let $\white(S,G)$ be the set of vertices remaining white after the zero forcing process has been executed on $G$ with the initial blue set $S$; that is, $S$ is a zero forcing set if and only if $\white(S,G)=\emptyset$. 
Recall that the derived colouring of an initial blue set is unique; therefore $\white(S, G)$ is well-defined.
At the end of the forcing process, no forcing move is possible, so no blue vertex has a unique white neighbour. Since each vertex is either blue or white, it follows that $\white(S,G)$ is a fort. 

Given a zero forcing set $S$, let $\fas(S, G)$ be the forcing arc set corresponding to the zero forcing process on $G$ with the initial blue set $S$.
As noted earlier, while the derived colouring is unique, the specific forcing moves of the zero forcing process may not be. We may enforce uniqueness by choosing a canonical representative forcing arc set of each set $S$. For example, we can stipulate that exactly one vertex is forced in each step. We can then index all vertices and, in each step of the process, choose the forcing move involving the eligible blue vertex of the lowest index. In the following, we will thus assume without loss of generality that $\fas(G,S)$ is uniquely defined.

\begin{figure}[ht]
\begin{mdframed}[style=MyFrame,align=center]

{\flushleft
\begin{algorithm}[{\bf Zero forcing set algorithm}]\label{algorithm}
\ \\ 

\textbf{Input:} A graph $G$, a nice path decomposition $X_0, X_1,\dots, X_k,X_{k+1}$ of $G$. \\
\textbf{Output:} A zero forcing set $S$ and a fort packing $\F$ of $G$. \\
\newcounter{algct}
\setcounter{algct}{1}
\begin{enumerate}[label=(L\thealgct)]
    \item  Set $t\leftarrow 0$, $S\leftarrow\emptyset$, $A\leftarrow\emptyset$, $\F\leftarrow\emptyset$. \stepcounter{algct}
    \item\label{line:back2} Set $z\leftarrow t$, $W\leftarrow \emptyset$.\stepcounter{algct}
    \item\label{line:loop} \emph{While} $W=\emptyset$ and ${z\leq k}$:  
    \begin{enumerate}[label=(L\thealgct\alph*)]
        \item Set $z\leftarrow z+1$;
        \item\label{line:W} Set $W\leftarrow \white(X_t\cup X_z\,,\, G(t,z))$.
    \end{enumerate} \stepcounter{algct}

    \item\label{line:revA} Set $A\leftarrow \overleftarrow{A}$.\stepcounter{algct}
    \item\label{line:newSource} Set $S\leftarrow \sources(A,X_0^t)$.\stepcounter{algct}
        \item\label{line:non-fin} {\em If} $W\neq\emptyset$: 
    \begin{enumerate}[label=(L\thealgct\alph*)]
        \item Add $W$ to $\F$;
        \item\label{line:S} Set $S\leftarrow S\cup X_{z-1}$; 
         \item\label{line:fas} Set $A'\leftarrow {A}\cup\fas(X_{t}\cup X_{z-1}\,,\,G(t,z))$;
        \item\label{line:removal} Set $A\leftarrow \{(u,v)\in A'\,:\, v\not\in X_{z}\}$; 
        \item\label{line:exit} Set $t\leftarrow z$ and go to \ref{line:back2}.
    \end{enumerate}\stepcounter{algct}
    \item\label{line:Wempty} {\em If} $W=\emptyset$: 
    \begin{enumerate}[label=(L\thealgct\alph*)]
        \item Set $A\leftarrow {A}\cup\fas(X_{t}\,,\,G(t,z))$.
    \end{enumerate}\stepcounter{algct}
    \item\label{line: return}  Return $S$ and $\F$.\stepcounter{algct}
\end{enumerate}
\end{algorithm}
}

\end{mdframed}
\end{figure}

Finally, we give a helpful lemma.
\begin{lemma}\label{lemma: s}
Let $G$ be a graph and $\X_k$  a nice path decomposition of $G$. 
Let $0\leq t< z\leq k+1$ be two indices. 
If $X_t\cup X_{z-1}$ is a zero forcing set for $G(t,z-1)$ but $X_t\cup X_{z}$ is not a zero forcing set for $G(t,z)$, then $X_{z}\subset X_{z-1}$. 
\end{lemma}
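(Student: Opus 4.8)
The plan is to use the defining step property of a nice path decomposition, property~\ref{cond: spd-set difference}, which tells us that $X_z$ is obtained from $X_{z-1}$ either by introducing a single new vertex (so $X_z = X_{z-1}\cup\{v\}$ with $v\notin X_{z-1}$) or by forgetting a single vertex (so $X_z = X_{z-1}\setminus\{v\}$). In the forget case we immediately have $X_z\subset X_{z-1}$ and are done. Thus the entire content of the lemma is to rule out the introduce case, and I would do this by contradiction: assuming $X_z = X_{z-1}\cup\{v\}$, I will show that $X_t\cup X_z$ actually \emph{is} a zero forcing set for $G(t,z)$, contradicting the hypothesis.

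First I would pin down the relationship between $G(t,z)$ and $G(t,z-1)$ in the introduce case. Using the interval property~\ref{cond:pd2}, the facts that $v\in X_z$ and $v\notin X_{z-1}$ force $v$ to appear in no bag of index at most $z-1$; in particular $v\notin X_t^{z-1}$. Hence $X_t^z = X_t^{z-1}\cup\{v\}$ with $v$ a genuinely new vertex, so $G(t,z-1)$ is exactly the subgraph of $G(t,z)$ obtained by deleting $v$, and the initial blue set $X_t\cup X_z = (X_t\cup X_{z-1})\cup\{v\}$ restricts on $X_t^{z-1}$ to precisely $X_t\cup X_{z-1}$, which by hypothesis is a zero forcing set for $G(t,z-1)$.

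The key step, and the one I expect to be the only real obstacle, is a monotonicity (or ``replication'') argument showing that the extra permanently-blue vertex $v$ cannot obstruct any forcing move. Since $v$ lies in the initial blue set, it stays blue throughout, so it is never the white neighbour of any vertex. I would take the sequence of forces witnessing that $X_t\cup X_{z-1}$ is a zero forcing set for $G(t,z-1)$ and replay it verbatim in $G(t,z)$, proving by induction on the number of steps that the blue set of the $G(t,z)$ run is always exactly the blue set of the $G(t,z-1)$ run together with $\{v\}$. The inductive step works because at each move a vertex $a$ has the same white neighbours in $G(t,z)$ as in $G(t,z-1)$: the only possible new neighbour of $a$ is $v$, which is blue and hence not counted among white neighbours, so every force that was legal in $G(t,z-1)$ remains legal in $G(t,z)$. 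Consequently, at the end all of $X_t^{z-1}$ together with $v$, i.e.\ all of $X_t^z$, is blue, making $X_t\cup X_z$ a zero forcing set for $G(t,z)$. This is the desired contradiction, so only the forget step is possible and $X_z\subset X_{z-1}$.
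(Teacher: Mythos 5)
Your proof is correct and takes essentially the same route as the paper's: a contradiction argument on the introduce case of property~\ref{cond: spd-set difference}, identifying $G(t,z-1)$ as $G(t,z)$ minus the new vertex and noting that this vertex, being in the initial blue set, cannot obstruct any force. The paper asserts these last two facts without elaboration, whereas you make the interval-property argument and the replay/monotonicity argument explicit.
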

\begin{proof}
By \ref{cond: spd-set difference}, either $X_z \subset X_{z-1}$ or $X_{z-1}\subset X_{z}$ where their difference is one vertex. 
Suppose by contradiction that $X_{z-1}\subset X_z$, and $X_z\setminus X_{z-1}= \{a\}$. Since $S= X_{z-1}\cup X_t$ is a zero forcing set for $G(t,z-1)$, and $G(t,z-1)=G(t,z)-a$, the set $X_t\cup X_z=S\cup \{ a\}$ is a zero forcing set for $G(t,z)$. This contradicts our assumption.
\end{proof}

Before we give the proof of correctness of our algorithm, we make some remarks. 
The loop \ref{line:loop} ends precisely when either $z=k+1$ and the algorithm is finished, or $z$ is such that $X_t\cup X_{z-1}$ is a zero forcing set for $G(t,z-1)$, but $X_t\cup X_z$ is not a zero forcing set for $G(t,z)$. By Lemma~\ref{lemma: s}, this implies that $X_{z}\subset X_{z-1}.$ 
It follows that $G(t,z)$ is a subgraph of $G(t,z-1)$ and $X_t\cup X_{z-1}$ is also a zero forcing set for $G(t,z)$. Step \ref{line:non-fin} is the application of the construction from Proposition~\ref{prop:cut}. 
Line \ref{line:removal} only applies when $X_t\cap X_{z}\neq \emptyset$. After this line, each vertex in $X_{z}$ will be a source vertex in $A$.

As a final remark, the duality between forcing arc sets and zero forcing sets implies that the algorithm can be implemented without maintaining $S$. In particular, lines \ref{line:S} and \ref{line:newSource} can be omitted, and $S$ can be obtained from one application of line \ref{line:newSource} at the end of the algorithm. We form both zero forcing set $S$ and forcing arc set $A$ explicitly for the clarity of our analysis and exposition.

\begin{theorem}\label{thm: proof of correctness}
Let $S$ be the set of vertices
returned by Algorithm~\ref{algorithm} with a given graph $G$ and a nice path decomposition $\X_k$. 
The set $S$ is a zero forcing set for $G$. 
\end{theorem}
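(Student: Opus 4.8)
The plan is to prove the theorem by induction on the iterations of the main loop, using a loop invariant recorded each time control returns to line~\ref{line:back2}. Writing $t$ for the current value of the counter, the invariant I would maintain is: \emph{$A$ is a forcing arc set for $G(0,t)$ in which every vertex of $X_t$ is a source}. The base case is immediate: initially $t=0$, $X_0=\emptyset$, $G(0,0)$ is the empty graph, and $A=\emptyset$ satisfies the invariant vacuously. The whole argument then rests on showing that one pass through the loop preserves this invariant, with Proposition~\ref{prop:cut} doing the heavy lifting.

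For the inductive step I would first treat a pass that exits the loop~\ref{line:loop} with $W\neq\emptyset$, so that control reaches line~\ref{line:non-fin}. The loop-exit condition together with Lemma~\ref{lemma: s} gives $X_z\subset X_{z-1}$, whence $X_t^z=X_t^{z-1}$, $G(t,z)=G(t,z-1)$, and $X_t\cup X_{z-1}$ is a zero forcing set for $G(t,z)$; thus the set $\fas(X_t\cup X_{z-1},G(t,z))$ in line~\ref{line:fas} is a well-defined forcing arc set whose sources $X_t\cup X_{z-1}$ include all of $X_t$. By property~\ref{cond:pd2}, $X_t=X_0^t\cap X_t^z$ separates $V_1=X_0^t\setminus X_t$ from $V_2=X_t^z\setminus X_t$ inside $G(0,z)$, so by Lemma~\ref{lemma:bag-cut} the pieces $G(0,t)$ and $G(t,z)$ meet exactly in the cut $X_t$. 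After line~\ref{line:revA} replaces $A$ by its reversal, Proposition~\ref{prop:reverse} makes it a forcing arc set for $G(0,t)$ with every vertex of $X_t$ a \emph{sink}; combined with the source condition on $\fas(X_t\cup X_{z-1},G(t,z))$, Proposition~\ref{prop:cut} shows the set $A'$ formed in line~\ref{line:fas} is a forcing arc set for $G(0,z)$. Finally line~\ref{line:removal} deletes exactly the arcs of $A'$ with head in $X_z$; by Lemma~\ref{lem:subset} the result is still a forcing arc set for $G(0,z)$, and now no arc has a head in $X_z$, so every vertex of $X_z$ is a source. Since line~\ref{line:exit} sets $t\leftarrow z$, the invariant is restored.

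Next I would handle termination and the final pass. Each visit to line~\ref{line:non-fin} strictly increases $t$ (as $z>t$), so after finitely many passes the loop exits with $W=\emptyset$; this forces $z=k+1$ and $\white(X_t,G(t,k+1))=\emptyset$, that is, $X_t$ is a zero forcing set for $G(t,k+1)$. In this pass, line~\ref{line:revA} replaces $A$ by its reversal $\overleftarrow{A}$, a forcing arc set for $G(0,t)$ in which $X_t$ consists of sinks, and line~\ref{line:Wempty} forms $A_{\mathrm{final}}=\overleftarrow{A}\cup\fas(X_t,G(t,k+1))$; as before $X_t$ separates $G(0,t)$ from $G(t,k+1)$ inside $G(0,k+1)$ and is a source set of $\fas(X_t,G(t,k+1))$, so Proposition~\ref{prop:cut} gives that $A_{\mathrm{final}}$ is a forcing arc set for $G(0,k+1)=G$. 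It then remains to identify the returned $S$ with the sources of $A_{\mathrm{final}}$. The value returned is the one set in line~\ref{line:newSource} of this pass, namely $\sources(\overleftarrow{A},X_0^t)$, and it is not altered afterward. I would show $\sources(A_{\mathrm{final}},V)=\sources(\overleftarrow{A},X_0^t)$: every vertex of $V_2=X_t^{k+1}\setminus X_t$ is forced during $\fas(X_t,G(t,k+1))$ and hence has in-degree $1$, so is never a source; while for each vertex of $X_0^t$ the arcs of $\fas(X_t,G(t,k+1))$ contribute no in-arc (its sources are exactly $X_t$), so its in-degree in $A_{\mathrm{final}}$ equals its in-degree in $\overleftarrow{A}$. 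Thus $S$ is precisely the source set of the forcing arc set $A_{\mathrm{final}}$ of $G$, and by Definition~\ref{def:fas} the sources of a forcing arc set form a zero forcing set, so $S$ is a zero forcing set for $G$.

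The step I expect to be the main obstacle is the inductive step, specifically marshalling the hypotheses of Proposition~\ref{prop:cut}: checking that reversal turns the cut vertices into sinks on one side while they remain sources on the other, correctly accounting for the overlap $X_t\cap X_{z-1}$, and verifying that the arc-removal in line~\ref{line:removal} restores the ``$X_z$ are sources'' half of the invariant. A secondary subtlety is that the two sides $V_1,V_2$ of the cut need not be single connected components, so I would invoke the fact that the proof of Proposition~\ref{prop:cut} uses only that $V_1$ and $V_2$ are separated by the cut, which holds here by property~\ref{cond:pd2}.
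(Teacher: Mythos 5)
Your proof is correct, and its skeleton is the same as the paper's: induction over passes of the algorithm with the invariant that $A$ is a forcing arc set of $G(0,t)$ in which every vertex of $X_t$ is a source, with Lemma~\ref{lemma: s}, Lemma~\ref{lemma:bag-cut}, Proposition~\ref{prop:reverse}, Proposition~\ref{prop:cut}, and Lemma~\ref{lem:subset} doing exactly the jobs you assign them. The genuine difference is how $S$ is treated. The paper carries a stronger invariant, namely that $X_{z_j}\subseteq S_j$ and $S_j=\sources(A_j,X_0^{z_j})$, and proving that this identity survives the merge at line~\ref{line:fas} and the arc removal at line~\ref{line:removal} (i.e., that $\sources(A_{j+1},X_0^{z_{j+1}})=\sources(\rev{A_j},X_0^{z_j})\cup X_{z_{j+1}-1}$) is the longest and most delicate part of its argument. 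You bypass this entirely by observing that line~\ref{line:newSource} \emph{overwrites} $S$ on every pass, so the returned set is determined by the final pass alone, where $W=\emptyset$, no $X_{z-1}$ is added and no arcs are removed, and the identification of $S$ with $\sources(A_{\mathrm{final}},V)$ is done once, in the easy case. This is a real simplification for the theorem as stated. What the paper's extra bookkeeping buys is reuse: the identity $S_j=\sources(A_j,X_0^{z_j})$ is invoked verbatim in the proof of Theorem~\ref{thm:ratio} (the count $|S_{j+1}|\le |S_j|+|X_{z_{j+1}-1}|$ starts from it), so with your leaner invariant that theorem would need the per-iteration source identity restored or re-proved. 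A small point in your favour: you explicitly note that $X_0^t\setminus X_t$ need not induce a single component of $G(0,z)-X_t$ and that the proof of Proposition~\ref{prop:cut} uses only that the two sides are separated by the cut; the paper applies the proposition without comment on this.
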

\begin{proof}
Let $z_0=0$, and let $z_1,\dots, z_h$ be the values of the index $z$ every time that the algorithm exits the {\em While} loop on \ref{line:loop}; note that $z_h=k+1$ necessarily. 

We will prove by induction that for any $j$, $1\leq j\leq h$, after finishing all steps in \ref{line:non-fin} in the $j$-th iteration, the set $S$ obtained at \ref{line:S} is a zero forcing set for $G(0, z_j)$ and the arc set $A$ obtained after \ref{line:removal} is a forcing arc set of $G(0,z_j)$ with the initial blue set $S$.
Further, we will show that each vertex of $X_{z_{j}}$ is a source of $A$; that is, $X_{z_j}\subset \sources(A, X_0^{z_j})$.

To be precise, let $S_j$ and $A_j$ denote the sets $S$ and $A$ at the end of the $j$-th iteration. 
The statement to be proved by induction on $j$ is that $A_j$ is a forcing arc set of $G(0,z_j)$, $X_{z_j}\subseteq S_j$, and $S_j=\sources(A_j,X_0^{z_j})$. For the base case when $j=0$, the graph $G(0, 0)$ is the empty graph and, therefore $A_0$ and $S_0$, both empty sets, trivially satisfy the statement.

Inductively, we assume that our statement holds for some $0\leq j<h$. We will then show that the statement holds for $j+1$ by analyzing the $(j+1)$-th iteration. For simplicity, let $t=z_j$ and $z=z_{j+1}$. Let $W$ be as it is at the end of the loop \ref{line:loop} in the $(j+1)$-th iteration. We assume first that $W\neq \emptyset$.
This implies $X_t\cup X_{z-1}$ is a zero forcing set for $G(t, z-1)$ but $X_t\cup X_z$ is not a zero forcing set for $G(t,z)$. 
 By Lemma \ref{lemma: s} we have $X_{z}\subset X_{z-1}$ and thus $G(t,z-1)=G(t,z)$. By the induction hypothesis, $A_j$ is a forcing arc set of $G(0,t)$ so that each vertex in $X_t$ is a source in $A_j$. 

By Lemma~\ref{lemma:bag-cut}, the set $X_t$ is a cut in $G(0,z)$. We can then apply Proposition~\ref{prop:cut}, with $G_1=G(0,t)$ and $G_2=G(t,z)$. By induction $\overleftarrow{A_j}$ is a forcing arc set in $G_1$ and by definition, $\fas(X_t\cup X_{z-1},G(t,z))$ is a forcing arc set in $G_2$. Let  $A_{t,z}=\fas(X_t\cup X_{z-1},G(t,z))$. Then applying Proposition \ref{prop:cut} gives that $\overleftarrow{A_j}\cup A_{t,z}$ is a forcing arc set of $G(0,z)$. Note that at \ref{line:revA}, $A$ is set to $\overleftarrow{A_j}$ and at \ref{line:fas}, the two forcing arc sets are combined into a forcing arc set $A'$. 

This satisfies the first part of the induction hypothesis for $z=z_{j+1}$, but there may be vertices in $X_{z}$ that are not sources of $A'$. Specifically, vertices in $X_t\cap X_z$ that were sinks in $\overleftarrow{A_j}$ and sources in $A_{t,z}$ may no longer be sources after these forcing arc sets are merged. This means that we may not have that $X_z\subset \sources(A', G(0,z))$.

This is remedied in line \ref{line:removal}, where arcs in $A'$ that have their head in $X_t\cap X_{z}$ are removed to create $A=A_{j+1}$; by Lemma \ref{lem:subset}, $A$ is still a forcing arc set of $G(0,z)$. As a result, each vertex in $X_{z}$ will be a source in $A$ after this step. We then have that $X_z\subset \sources(A_{j+1},X_0^z)$, which proves the second part of the induction hypothesis for $j+1$.

Finally, we show that $S=S_{j+1}$ equals the set of sources of $A=A_{j+1}$. To do this, we must show that 
$$
S_{j+1} = \sources(\rev{A_j}, X_0^{t})\cup X_{z-1}=\sources(A_{j+1}, X_0^{z}),
$$
where $S_{j+1}$ and $A_{j+1}$ are obtained algorithmically; in particular, $S_{j+1}$ is obtained after the steps~\ref{line:revA}, \ref{line:newSource}, and \ref{line:S}. 
After \ref{line:revA} and \ref{line:fas} we have that 
\begin{equation}\label{eqn:A'}
A' = \rev{A_j}\cup A_{t,z}
\end{equation}
By definition, $\sources(A_{t,z},X_t^z) =X_t\cup X_{z-1}$.
Merging forcing arc sets cannot create new sources, and thus
$$
\sources(A', X_0^z) \subset \sources(\rev{A_j}, X_0^t)\cup X_{z-1}\cup X_t.
$$
Let $v\in X_t\setminus X_{z-1}$ and suppose $v\in \sources (A_{j+1},X_0^z)$. 
Since the step~\ref{line:removal} only can create new sources in $X_{z-1}$, we also have that $v\in \sources(A',X_0^z)$. 
By construction as in~\eqref{eqn:A'}, $v$ is a source of  $A_{t,z}$. 
If $v$ is still a source in $A'$, then there can be no arc in $\rev{A_j}$ with head $v$. 
Thus, $v\in \sources(\rev{A_j},X_0^t)$. It follows that
$$
\sources(A_{j+1}, X_0^z) \subset \sources(\rev{A_j}, X_0^t)\cup X_{z-1}.
$$
We next show the reverse inclusion. We showed earlier that $X_{z-1}\subseteq \sources(A_{j+1},X_0^z)$. 
Consider $u\in \sources(\rev{A_j},X_0^t)$. 
If $u\in \sources(A',X_0^z)\subset \sources(A_{j+1},X_0^z)$ then we are done.  Assume then that $u\not\in \sources(A',X_0^z)$.
Since $u$ is  a source in $\rev{A_j}$
but not a source in $A'=\rev{A_j}\cup A_{t,z}$, there must be an in-going arc $(w,u)$ in $A_{t,z}$. 
So $u$ is not a source in $A_{t,z}$ and therefore not in the forcing set $X_t\cup X_{z-1}$, so $u\in X_t^z\setminus (X_t\cup X_{z-1})$. Since $X_t$ is a cut set in $G(0,z)$, this contradicts that $u\in X_0^t$. 
This proves that 
$$
\sources(A_{j+1}, X_0^z) = \sources(\rev{A_j}, X_0^t)\cup X_{z-1},
$$
as required. 

This completes the proof of the inductive step for the case where $W\neq \emptyset$.
We now consider the case where 
$W = \emptyset$ at the end of loop \ref{line:loop} in the $j$-th iteration. In that case, we must have that $z=z_{j+1}=k+1$ and thus $G(0,z)=G$ and $X_z=\emptyset$. By the induction hypothesis, $X_t\cup X_z=X_t$ is a zero forcing set for $G(t,k+1)$. Using Proposition~\ref{prop:cut}, we immediately obtain that $A_{j+1}=A$ as defined in \ref{line:revA} and \ref{line:Wempty} is a forcing arc set, and $S_{j+1}$ as defined in \ref{line:newSource} is the corresponding zero forcing set.  Therefore, $S$ is a zero forcing set and $A$ is a forcing arc set of $G$ after this iteration as well. 
This completes the inductive proof. 
\end{proof}
\begin{theorem}
    Let $\mathcal{F}$ be the set of forts that Algorithm~\ref{algorithm} returns with a given graph $G$ and its path decomposition $\X_k$. 
    The set $\mathcal{F}$ is a fort packing of $G$; that is, all forts are mutually disjoint. 
\end{theorem}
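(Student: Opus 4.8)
The plan is to show that any two forts added to $\F$ in different iterations are disjoint. In each iteration where $W \neq \emptyset$, the algorithm adds the fort $W = \white(X_t \cup X_z, G(t,z))$ to $\F$, where $t = z_j$ and $z = z_{j+1}$ for consecutive values in the sequence $z_0 = 0, z_1, \dots, z_h = k+1$. So I would prove that the fort $W_{j+1}$ added in iteration $j+1$ (living in $G(z_j, z_{j+1})$) is disjoint from every fort $W_{j'+1}$ added in a later iteration (living in $G(z_{j'}, z_{j'+1})$ with $j' > j$). Since the iterations proceed with strictly increasing index values $z_0 < z_1 < \cdots < z_h$, the natural strategy is to localize each fort within a band of bags and then use property \ref{cond:pd2} (the interval property of path decompositions) to separate them.

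First I would pin down where the fort $W_{j+1}$ lives. By construction $W_{j+1} \subseteq X_t^z = X_{z_j}^{z_{j+1}}$, the union of bags strictly between the two consecutive cut indices. More usefully, I would argue that $W_{j+1}$ contains no vertex of the cut $X_z = X_{z_{j+1}}$: indeed, $X_z \subseteq X_t \cup X_z$, which is the initial blue set for the forcing process defining $W$, so every vertex of $X_z$ starts blue and hence cannot be white. Similarly $W_{j+1}$ contains no vertex of $X_t = X_{z_j}$. Thus each fort $W_{j+1}$ is contained in $X_{z_j}^{z_{j+1}} \setminus (X_{z_j} \cup X_{z_{j+1}})$.

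The key step, and the one I expect to be the main obstacle, is to show these bands are genuinely disjoint once we quotient out the boundary bags. Consider two forts $W_{j+1}$ and $W_{j'+1}$ with $j < j'$, and suppose for contradiction that some vertex $v$ lies in both. Then $v \in X_{z_j}^{z_{j+1}}$ and $v \in X_{z_{j'}}^{z_{j'+1}}$, so $v$ appears in a bag with index $\leq z_{j+1}$ and in a bag with index $\geq z_{j'} \geq z_{j+1}$. By property \ref{cond:pd2}, $v$ must then appear in \emph{every} bag with index between these, in particular in $X_{z_{j+1}}$ (since $z_j < z_{j+1} \leq z_{j'} < z_{j'+1}$ forces $z_{j+1}$ to lie in the overlapping range). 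But we established that $v \notin X_{z_{j+1}}$ because $v \in W_{j+1}$ and $W_{j+1}$ avoids the boundary bag $X_{z_{j+1}}$. This is the desired contradiction. The delicate point to verify carefully is that the index $z_{j+1}$ really does fall strictly inside the interval $[\,\cdot\,]$ guaranteed by \ref{cond:pd2}, which requires $z_j < z_{j+1}$ and $z_{j+1} \leq z_{j'}$; the first holds since the loop \ref{line:loop} always increments $z$ at least once, and the second holds since the $z_i$ are strictly increasing.

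Finally, I would handle the edge cases: an iteration with $W = \emptyset$ (line \ref{line:Wempty}) adds nothing to $\F$ and can be ignored, and the last iteration has $z_h = k+1$ with $X_{k+1} = \emptyset$, so it also contributes no fort. Assembling these observations, every fort in $\F$ is confined to a half-open band of bags that, after removing its bounding cut bags, is separated from every other fort's band by an intervening cut bag; property \ref{cond:pd2} then forbids any shared vertex, proving that all forts in $\F$ are mutually disjoint.
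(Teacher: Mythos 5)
Your proof is correct, and its first step---pinning each fort $W_{j+1}$ inside $X_{z_j}^{z_{j+1}}\setminus(X_{z_j}\cup X_{z_{j+1}})$ because the boundary bags form the initial blue set of the forcing process that defines it---is exactly the localization the paper uses. Where you diverge is the disjointness mechanism: the paper argues by induction over the iterations, maintaining the invariant that all forts produced so far lie in $X_0^{z_j}\setminus X_{z_j}$, and invokes Lemma~\ref{lemma:bag-cut} (each bag separates what comes before it from what comes after) to conclude the new fort is disjoint from all earlier ones; you instead give a direct pairwise argument, taking a hypothetical common vertex $v$ of two forts and applying the interval property \ref{cond:pd2} to force $v$ into the boundary bag $X_{z_{j+1}}$, a contradiction. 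Your route is more elementary and self-contained: it uses only the definition of a path decomposition, avoids the inductive bookkeeping, and isolates precisely the property being used---the paper's appeal to ``separation'' is really an appeal to the set-theoretic disjointness of $X_0^t\setminus X_t$ and $X_t^z\setminus X_t$, which is exactly what \ref{cond:pd2} delivers. What the paper's version buys is brevity given its existing infrastructure (Lemma~\ref{lemma:bag-cut} is needed elsewhere anyway) and a shape parallel to the inductive correctness proof of Theorem~\ref{thm: proof of correctness}. One small gap common to both arguments: each $W_{j+1}$ is a priori a fort of the subgraph $G(z_j,z_{j+1})$, and to call $\F$ a fort packing \emph{of $G$} one should also note---by the same bag-separation fact---that vertices of $G$ outside that subgraph have no neighbours in $W_{j+1}$, hence cannot have exactly one; neither you nor the paper spells this out.
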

\begin{proof}
    Following the same setup as in Theorem~\ref{thm: proof of correctness}, we may index every fort $W_j\in\calF$ to be the fort that is added to $\calF$ at \ref{line:W} of the $j$-th iteration of the algorithm, for $1\leq j\leq h$. 
    {We show by induction that after the $j$-th iteration, $W_1,\dots ,W_{j}$ is a collection of disjoint forts contained in $X_0^{z_j}\setminus X_{z_j}$.} This statement is trivially satisfied after initialization at step~(L1). Fix $j$, and assume the statement is true after the $(j-1)$-th iteration.
    We will use the notation that, in the $j$-th iteration, $t=z_{j-1}$, $z=z_j$, and $W_j= \white(X_{t}\cup X_{z}\,,\, G(t,z))$. 

    We then have that $W_j\subset X_{t}^{z}\sm (X_t \cup X_z)$. 
    By Lemma~\ref{lemma:bag-cut}, the set $X_{t}$ separates $X_{t}^{z}\sm X_{t}$ from $X_0^{t}\sm X_{t}$. 
    By induction, all other $W_i\in\F$, for $1\leq i< j$, are subsets of $X_0^t\sm X_t$ and thus disjoint from $W_j$.
\end{proof}
\begin{theorem}\label{thm:ratio}
Let $G$ be a graph and $\X_k$ be a nice path decomposition of width $w$.
Let $S$ and $\F$ be the zero forcing set and fort packing returned by Algorithm~\ref{algorithm}, given $G$ and $\X_k$. 
The set $S$ is of order at most $(w+1)|\F|$.
\end{theorem}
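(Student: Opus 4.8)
The plan is to reuse the iteration structure already set up in the proof of Theorem~\ref{thm: proof of correctness}. Write $z_0=0<z_1<\cdots<z_h=k+1$ for the successive values of $z$ at which the algorithm exits the loop \ref{line:loop}, and let $S_j$ and $A_j$ denote the values of $S$ and $A$ at the end of the $j$-th iteration. The first thing I would record is that $|\F|=h-1$: a fort is appended to $\F$ within step \ref{line:non-fin}, which is entered exactly in those iterations with $W\neq\emptyset$. Since $z_j\leq k$ for $j<h$, the loop in those iterations can only have exited because $W\neq\emptyset$, whereas the final iteration reaches $z_h=k+1>k$ and so exits with $W=\emptyset$, contributing no fort.

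The key structural fact I would isolate is that reversing a forcing arc set preserves the number of sources. Indeed, for any forcing arc set $A$ on a vertex set $U$, both $\sources(A,U)$ and $\sinks(A,U)$ have size equal to the number of dipath components of $H=(U,A)$, since each component has a unique source and a unique sink; reversal merely swaps sources with sinks while leaving the components unchanged, so $|\sources(\rev{A},U)|=|\sources(A,U)|$. Combined with the identity $S_{j-1}=\sources(A_{j-1},X_0^{z_{j-1}})$ established in the correctness proof, this gives $|\sources(\rev{A_{j-1}},X_0^{z_{j-1}})|=|S_{j-1}|$.

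With these two facts the bound follows by induction on the iterations. For $1\leq j\leq h-1$ (the iterations with $W\neq\emptyset$) the correctness proof establishes $S_j=\sources(\rev{A_{j-1}},X_0^{z_{j-1}})\cup X_{z_j-1}$, so
\[
|S_j|\leq |\sources(\rev{A_{j-1}},X_0^{z_{j-1}})|+|X_{z_j-1}|=|S_{j-1}|+|X_{z_j-1}|\leq |S_{j-1}|+(w+1),
\]
using $|X_{z_j-1}|\leq w+1$. Starting from $|S_0|=0$, this yields $|S_{h-1}|\leq (h-1)(w+1)$. Finally, in iteration $h$ we have $W=\emptyset$, so step \ref{line:non-fin}, and in particular the augmentation $S\leftarrow S\cup X_{z-1}$ in \ref{line:S}, is skipped, and the returned set is merely $S=\sources(\rev{A_{h-1}},X_0^{z_{h-1}})$ as set in \ref{line:newSource}. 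By the invariance above, $|S|=|S_{h-1}|\leq (h-1)(w+1)=(w+1)|\F|$.

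I expect the main obstacle to be precisely this last point. Because $S$ is recomputed through a reversal rather than simply accumulated, a naive tally that adds $w+1$ in every iteration overcounts by one bag and yields only $|S|\leq (w+1)(|\F|+1)$. The saving comes from two observations working together: reversal does not change the source count, and the terminal iteration (where $z=k+1$ and $W=\emptyset$) adds no new bag to $S$. Hence care is needed to verify both that the final application of \ref{line:newSource} leaves the cardinality equal to $|S_{h-1}|$ and that no fort is associated with that iteration, so that the $(w+1)$ factor is not lost.
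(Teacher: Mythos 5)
Your proposal is correct and takes essentially the same route as the paper's own proof: the same induction over the iterations $z_0,\dots,z_h$ borrowed from the correctness proof, the same key observation that reversal preserves the source count (each dipath component has exactly one source and one sink), the same step bound $|S_j|\leq |S_{j-1}|+|X_{z_j-1}|\leq |S_{j-1}|+(w+1)$, and the same separate treatment of the final iteration with $W=\emptyset$, which adds no bag and no fort. The only cosmetic difference is that you pin down $|\F|=h-1$ exactly, whereas the paper records the looser statement $|\F|\leq h$ while implicitly relying on the same count of one fort per non-final iteration.
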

\begin{proof}
Fix a graph $G$ and let $S$ and $\calF$ be the two sets returned by the algorithm.
    Again, we invoke the induction setup in the proof of Theorem~\ref{thm: proof of correctness} and notice that the index $h$ represents the number of iterations. In each iteration except for possibly the last one, when $W=\emptyset$ and $z_h = z=k+1$, exactly one fort is added to $\F$ at \ref{line:W}. It follows that  $|\calF|\leq h$. 
    We proceed by induction to show that $|S_j|\leq (w+1)j$, for all $0\leq j\leq h$. 
    When $j = 0$, we have $X_0^0 = \emptyset$ and, therefore, $|S_j|\leq (w+1)\cdot0 = 0$ indeed. 
    
    Inductively, fix $0\leq j < h$ and assume that $|S_j|\leq (w+1)j$; 
    we will then show that $|S_{j+1}|\leq (w+1)(j+1)$.
    It follows from the proof of Theorem \ref{thm: proof of correctness} that $S_j=\sources(A_j, X_0^{z_j})$. Therefore, after \ref{line:revA}, $S=\sinks(A_j, X_0^{z_j})$, and thus 
    $$|S|=|\sinks(A_j, X_0^{z_j})|=|\sources(A_j, X_0^{z_j})|=|S_j|.$$ 
    At~\ref{line:S}, $X_{z_{j+1}-1}$ is added to $S$. We have that
\begin{align*}
    |S_{j+1}|  
        \leq & |S_j| + |X_{z_{j+1}-1}| \\
        \leq & (w+1)j + (w+1) \\ 
        = &  (w+1)(j+1),
\end{align*}
which was what we wanted. 
If we are in the last iteration and $W=\emptyset$, then $S_{j+1}=\sinks(A_j,X_0^{z_j})$ and thus $|S_{j+1}|=|S_j|$, while we add no forts to $\calF$. Therefore, we again have that the zero forcing set satisfies $|S_{j+1}|\leq (w+1)|\calF|.$ 
\end{proof}
\begin{corollary}\label{cor:factor}
    Algorithm~\ref{algorithm} is a $(\pw(G)+1)$-approximation algorithm. 
\end{corollary}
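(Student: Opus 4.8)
The plan is to derive Corollary~\ref{cor:factor} directly from the three theorems that precede it, combined with the known lower bound $\ft(G)\leq Z(G)$. The statement to establish is that Algorithm~\ref{algorithm}, when fed an \emph{optimal} nice path decomposition (so that its width $w$ equals $\pw(G)$), returns a zero forcing set whose size is at most $(\pw(G)+1)$ times the optimum $Z(G)$.

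First I would assemble the pieces. By Theorem~\ref{thm: proof of correctness}, the returned set $S$ is a genuine zero forcing set for $G$, so $|S|$ is an upper bound on nothing automatically but is a valid (feasible) solution. By the subsequent theorem, the returned collection $\F$ is a fort packing, i.e.\ a family of pairwise disjoint forts, so by definition of the fort number we have $|\F|\leq \ft(G)$. Crucially, since every zero forcing set must meet every fort and the forts in $\F$ are disjoint, we also have $|\F|\leq \ft(G)\leq Z(G)$; this is exactly the chain $\ft(G)\leq Z(G)$ recorded just after Theorem~\cite{fast2018effects} together with the definition of $\ft$. Then Theorem~\ref{thm:ratio} gives the structural inequality $|S|\leq (w+1)|\F|$ relating the output size to the fort packing size and the width.

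The main step is then to chain these inequalities. With an optimal decomposition $w=\pw(G)$, I would write
\begin{equation*}
|S|\;\leq\;(w+1)|\F|\;=\;(\pw(G)+1)\,|\F|\;\leq\;(\pw(G)+1)\,Z(G),
\end{equation*}
where the first inequality is Theorem~\ref{thm:ratio}, the equality uses $w=\pw(G)$, and the last inequality uses $|\F|\leq Z(G)$. Since $S$ is a feasible zero forcing set and $Z(G)$ is the optimum, this exhibits a solution within a factor $\pw(G)+1$ of optimal, which is precisely the meaning of a $(\pw(G)+1)$-approximation algorithm. I would also note in passing that the same computation shows $\ft(G)\leq |\F|$ could be run the other way to obtain the ratio bound $Z(G)/\ft(G)\leq \pw(G)+1$ advertised in the introduction as Corollary~\ref{cor:bound}, but that is a separate corollary.

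I do not expect a genuine obstacle here, since all the hard work is in Theorems~\ref{thm: proof of correctness} and~\ref{thm:ratio}; the only point requiring care is the dual role of $\F$. The quantity $|\F|$ is simultaneously the denominator in the algorithm's guarantee (via Theorem~\ref{thm:ratio}) and a \emph{lower} bound on $Z(G)$ (via the fort-packing inequality), and the argument hinges on using it in both roles at once. I would therefore make explicit that the same returned object $\F$ serves both purposes, so that no separate lower-bound certificate is needed; this is what turns the ratio $|S|\leq(w+1)|\F|$ into an approximation guarantee against the true optimum $Z(G)$ rather than merely against $|\F|$.
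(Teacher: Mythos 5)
Your proposal is correct and matches the paper's own proof, which likewise chains $|\F|\leq\ft(G)\leq Z(G)\leq|S|\leq(\pw(G)+1)|\F|$ under the assumption that the input decomposition is optimal. The only cosmetic difference is that the paper proves Corollaries~\ref{cor:factor} and~\ref{cor:bound} together in a single displayed chain of inequalities, whereas you treat the ratio bound as a side remark.
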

\begin{corollary}\label{cor:bound}
    For any graph $G$, $\ft(G)\leq Z(G)\leq(\pw(G)+1)\ft(G) $. 
\end{corollary}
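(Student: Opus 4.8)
The plan is to derive Corollary~\ref{cor:bound} directly from the already-established machinery, treating the lower bound and the upper bound separately. The lower bound $\ft(G)\leq Z(G)$ is exactly the cited result of~\cite{fast2018effects} restated earlier in this section: every zero forcing set must intersect every fort, hence must contain at least one vertex from each member of any fort packing, so $Z(G)$ is at least the size of the largest fort packing, namely $\ft(G)$. Nothing further is needed there.

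For the upper bound $Z(G)\leq(\pw(G)+1)\ft(G)$, the strategy is to run Algorithm~\ref{algorithm} on $G$ together with an \emph{optimal} nice path decomposition, i.e.\ one of width $w=\pw(G)$, whose existence is guaranteed by the first of the two lemmas quoted from \cite{1994pathwidth}. I would then combine three facts established immediately above. First, by Theorem~\ref{thm: proof of correctness}, the set $S$ returned is a genuine zero forcing set, so $Z(G)\leq|S|$. Second, by Theorem~\ref{thm:ratio}, the returned $S$ and $\F$ satisfy $|S|\leq(w+1)|\F|=(\pw(G)+1)|\F|$. Third, the preceding theorem shows $\F$ is a fort packing, and since $\ft(G)$ is by definition the \emph{maximum} size of a fort packing, we have $|\F|\leq\ft(G)$. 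Chaining these gives $Z(G)\leq|S|\leq(\pw(G)+1)|\F|\leq(\pw(G)+1)\ft(G)$, which is the desired inequality.

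I do not anticipate a serious obstacle, since this corollary is essentially a bookkeeping assembly of results proved in the three theorems just before it; the only subtlety worth a sentence is to make explicit that one feeds an optimal path decomposition into the algorithm so that the width parameter $w$ appearing in Theorem~\ref{thm:ratio} equals $\pw(G)$ rather than some larger width. The one direction of inequality that deserves a moment's care is $|\F|\leq\ft(G)$: this is the step that uses the definition of the fort number as a maximum over all fort packings, and it goes the ``right'' way precisely because the algorithm's output $\F$ is a particular (not necessarily optimal) fort packing. Thus the whole argument is a short concatenation, and I would write it as a two or three line proof citing Theorem~\ref{fast2018effects}'s restatement for the lower bound and Theorems~\ref{thm: proof of correctness} and~\ref{thm:ratio} for the upper bound.
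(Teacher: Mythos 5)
Your proposal is correct and matches the paper's own proof essentially step for step: the paper likewise feeds an optimal-width nice path decomposition into Algorithm~\ref{algorithm}, invokes Theorem~\ref{thm: proof of correctness} (the output $S$ is a zero forcing set), the preceding theorem (the output $\F$ is a fort packing, hence $|\F|\leq\ft(G)$), and Theorem~\ref{thm:ratio} ($|S|\leq(\pw(G)+1)|\F|$), chaining exactly the inequalities you wrote, with the lower bound $\ft(G)\leq Z(G)$ coming from the cited result of~\cite{fast2018effects}. Nothing is missing; your remark about needing the decomposition to be optimal so that $w=\pw(G)$ is the same observation the paper makes at the start of its proof.
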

\begin{proof}[Proof of Corollary~\ref{cor:factor} and~\ref{cor:bound}]
    Suppose that the given nice path decomposition to Algorithm~\ref{algorithm} is optimal; that is, it is of width $w = \pw(G)$.
    Let $S$ and $\F$ be the zero forcing set and fort packing of $G$ returned by the algorithm. 
    Since finding the fort number is a maximization problem, we have $|\F|\leq \ft(G)$. 
    Applying Theorem~\ref{thm:ratio}, let $\ft=\ft(G)$ and $\pw=\pw(G)$, we have 
    $$|\F|\leq \ft\leq Z(G)\leq |S|\leq (\pw+1)|\F|\leq (\pw+1)\ft\leq (\pw+1)Z(G);$$
    this concludes the inequalities. 
\end{proof}


Finally, we analyze the running time complexity of Algorithm~\ref{algorithm}. 
In \cite{BrimkovFastHicks2019}, the complexity of the zero forcing process was analyzed. In particular, an algorithm was given that computes the  {\em closure} of a set $S\subset V$ of $G = (V, E)$, which is the set of all blue vertices after applying all possible forcing. In our algorithm, we need to compute the set of vertices that remains white; clearly, this can be done with the same algorithm. 
\begin{proposition}[\cite{BrimkovFastHicks2019}]
Let $G=(V, E)$ be a graph of order $n$ and size $m$, and let $S \subset V$. The set $\white(S,G)$ can be found in $O(m+n)$ time.
\end{proposition}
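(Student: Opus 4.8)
The plan is to prove the statement constructively, by exhibiting a simulation of the zero forcing process that runs in linear time. The central idea is to maintain, for each vertex $v$, a counter $c(v)$ recording the number of white neighbours of $v$, together with a work queue $Q$ holding exactly those blue vertices that currently have a single white neighbour, i.e.\ the vertices that are ready to force.

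First I would initialise by colouring the vertices of $S$ blue and all others white, and computing $c(v)$ for every $v$ in a single pass over the adjacency lists, at cost $O(m+n)$; every blue vertex $u$ with $c(u)=1$ is placed on $Q$. The main loop then repeatedly extracts a vertex $u$ from $Q$, and, provided $u$ still satisfies $c(u)=1$, locates its unique white neighbour $v$, recolours $v$ blue, and for each neighbour $w$ of $v$ decrements $c(w)$, enqueueing $w$ whenever $w$ is blue and $c(w)$ has dropped to $1$ (and enqueueing $v$ itself if $c(v)=1$). When $Q$ empties, no blue vertex has a unique white neighbour, so no further forcing is possible; the set of vertices still coloured white is then precisely $\white(S,G)$, which is returned.

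For the running time the key observations are that each vertex is recoloured blue at most once and that blueness is monotone. Each recolouring of a vertex $v$ triggers work proportional to $\deg(v)$, namely the updates to the counters of its neighbours, so the total update cost is $O\bigl(\sum_v \deg(v)\bigr)=O(m)$. The one step that requires care is locating the unique white neighbour of a ready vertex $u$ without repeatedly rescanning neighbours that have already turned blue. I would handle this by equipping each vertex with a scan pointer into its adjacency list that advances only past neighbours already known to be blue; since a neighbour never reverts to white, each pointer moves forward at most $\deg(u)$ times, contributing $O(m)$ in total.

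The main obstacle is precisely this amortised bookkeeping: a naive search for the unique white neighbour each time $u$ is dequeued could cost $\Theta(\deg(u))$ per dequeue and be repeated many times, which would destroy linearity. The monotonicity-of-blueness argument, encapsulated in the forward-only scan pointer, is what collapses this to $O(m)$ overall and, together with the $O(m+n)$ initialisation, yields the claimed $O(m+n)$ bound.
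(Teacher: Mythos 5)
Your proposal is correct, but it is worth pointing out that the paper itself offers no proof of this proposition at all: the statement is imported verbatim from \cite{BrimkovFastHicks2019}, where an algorithm computing the closure of $S$ (the final blue set) is given, and the paper merely observes that the white remainder $\white(S,G)$ comes from the same computation. So your write-up supplies exactly what the paper delegates to a citation, and it does so with the standard linear-time simulation: a counter $c(v)$ of white neighbours per vertex, a queue of blue vertices that are ready to force, and a forward-only scan pointer to locate the unique white neighbour. The argument is sound. The two load-bearing points, which you do address, are (i) the invariant that every blue vertex with $c=1$ is on the queue --- you cover all three ways a vertex can enter that state (at initialization, when its count drops to $1$ while already blue, and when it turns blue with count already $1$), and monotonicity of the counts guarantees each vertex is enqueued $O(1)$ times; and (ii) the amortised scan-pointer argument, which is indeed what prevents repeated $\Theta(\deg(u))$ rescans and collapses the neighbour-location cost to $O(m)$ overall. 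One small point you use implicitly: your algorithm performs forcings in one particular order, whereas $\white(S,G)$ is defined via the derived colouring; concluding that the terminal white set equals $\white(S,G)$ requires that the derived colouring is independent of the order of forces, a fact the paper records in its introduction with citations, so it is legitimate to invoke but should be named.
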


The most time-consuming step in our algorithm is \ref{line:loop}, in particular since it repeatedly executes \ref{line:W}. 
The step~\ref{line:removal} has running time at most $|X_z|\leq pw(G)+1\leq n$. All other steps involve merging sets or reversing forcing arcs; these steps can be done in $O(n)$ time if implemented with proper data structures. Therefore, each iteration has $O(n+m)$ running time. 
The number of iterations is at most the number of bags in the smooth path decomposition; by Lemma~\ref{lem:smooth_pw}, it is at most $4n$. Moreover, as stated earlier, every path decomposition can be transformed into a smooth path decomposition of the same width in $O(n)$ time. This leads to the following proposition about the complexity of our algorithm. 

We assume our graphs are connected, so $m\geq n-1$.

\begin{proposition}
   Let $G=(V, E)$ be a connected graph of order $n$ and size $m$, and suppose a path decomposition of $G$ of width $w$  is given. A zero forcing set of size at most $(w+1)Z(G)$ can be found in $O(nm)$ time.
\end{proposition}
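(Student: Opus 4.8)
The plan is to combine the correctness results already established (Theorem~\ref{thm: proof of correctness} and Theorem~\ref{thm:ratio}) with the complexity bounds on the individual steps, and to account for the preprocessing that converts an arbitrary path decomposition into a nice one. First I would invoke Lemma~\ref{lem:smooth_pw}: given a path decomposition of width $w$ with at most $4n$ bags, we can produce a nice path decomposition of the same width $w$ with at most $4n$ bags in $O(n)$ time. (If the input decomposition has more than $4n$ bags, one first trims redundant bags in linear time; I would note this but not belabor it.) This nice path decomposition is then fed to Algorithm~\ref{algorithm}.

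Next I would verify the per-iteration cost. The dominant operation is the loop \ref{line:loop}, which repeatedly calls \ref{line:W} to compute $\white(X_t\cup X_z, G(t,z))$; by the cited Proposition of~\cite{BrimkovFastHicks2019}, each such whiteness computation runs in $O(m+n)$ time. The remaining steps---reversing $A$ at \ref{line:revA}, extracting sources at \ref{line:newSource}, merging forcing arc sets at \ref{line:fas}, and the arc removal at \ref{line:removal} (cost at most $|X_z|\leq w+1\leq n$)---are all $O(n+m)$ when implemented with appropriate data structures. Since the connectivity assumption gives $m\geq n-1$, we have $O(n+m)=O(m)$, so each iteration costs $O(m)$.

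I would then bound the number of iterations. Each iteration advances $t$ to a strictly larger bag index $z$, so the number of iterations is at most the number of bags, which by Lemma~\ref{lem:smooth_pw} is at most $4n=O(n)$. Here lies the one subtlety I would watch for: within a single iteration the \emph{While} loop at \ref{line:loop} itself runs several times, each invoking a whiteness computation. The cleanest accounting is that each value of $z$ from $1$ to $k+1$ is examined by the loop at most once across the whole execution (since $z$ only increases and $t$ is reset to the terminal $z$ of each iteration), so the total number of whiteness computations over the entire algorithm is $O(n)$, each costing $O(m)$. This gives total running time $O(nm)$ for the main algorithm, plus the $O(n)$ preprocessing, for an overall $O(nm)$.

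The main obstacle---really the only one requiring care---is this amortized counting of the whiteness computations: one must confirm that the inner \emph{While} loop does not cause an $O(n)$-per-iteration blowup that would push the bound to $O(n^2 m)$. Once that amortization is settled (each index $z$ triggers at most one call to \white), the size guarantee $|S|\leq(w+1)Z(G)$ follows directly: Corollary~\ref{cor:factor} and Theorem~\ref{thm:ratio} give $|S|\leq (w+1)|\F|\leq(w+1)Z(G)$ because $|\F|\leq\ft(G)\leq Z(G)$ by Corollary~\ref{cor:bound}. Assembling the correctness statement (Theorem~\ref{thm: proof of correctness}, that $S$ is indeed a zero forcing set), the size bound, and the $O(nm)$ time bound completes the proof.
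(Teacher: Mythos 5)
Your proposal is correct and follows essentially the same route as the paper: convert to a nice path decomposition via Lemma~\ref{lem:smooth_pw}, bound each whiteness computation by $O(n+m)$ using the cited result of~\cite{BrimkovFastHicks2019}, bound the total number of such computations by the number of bags ($\leq 4n$), and obtain the size guarantee from Theorem~\ref{thm: proof of correctness}, Theorem~\ref{thm:ratio}, and $|\F|\leq\ft(G)\leq Z(G)$. Your amortized accounting---that each index $z$ triggers at most one call to $\white$ over the whole execution because $z$ never decreases---is a welcome sharpening of the paper's looser phrasing (``the number of iterations is at most the number of bags''), but it is the same argument, made explicit.
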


\section{New proofs of old results}\label{sec:newproofs}

The concept of a forcing arc set leads to a new perspective on the question of finding the zero forcing number of a graph. Here, we will present new proofs of known results using this new perspective. 
In particular, we will consider proper interval graphs and the strong product of two graphs. Our proofs will be constructive in the sense that we will explicitly define a forcing arc set that obtains the bound. 

\begin{proposition}
    Any forcing arc set of a graph $G$ can contain at most one arc from any clique in $G$. 
\end{proposition}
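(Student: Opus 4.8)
The plan is to prove the contrapositive: if a forcing arc set $A$ contained two arcs from the same clique $K$ in $G$, I would exhibit a chain twist, contradicting Theorem~\ref{thm:chaintwist}. So suppose $A$ contains two distinct arcs $e_1$ and $e_2$, both with endpoints in a clique $K$. By \ref{cond:fas1}, $H=(V,A)$ is a disjoint union of dipaths, so I would first split into cases according to how $e_1$ and $e_2$ sit relative to one another: whether they share a vertex or are vertex-disjoint.

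First I would handle the case where $e_1$ and $e_2$ are vertex-disjoint, say $e_1=(a,b)$ and $e_2=(c,d)$ with $a,b,c,d$ four distinct vertices of $K$. Since every pair of vertices of $K$ is adjacent in $G$, the four-cycle $a\,b\,c\,d\,a$ (using the edges $bc$ and $da$ of the clique together with the arcs $e_1,e_2$) is a cycle in $G$. I would then verify condition~\eqref{eq:chaintwist} on this cycle: the two edges $bc$ and $da$ are the only candidate edges not in $A$, and each is flanked by an arc of $A$ (namely $e_1$ before $da$ and $e_2$ after $bc$, with the appropriate cyclic adjacencies), so \eqref{eq:chaintwist} holds and the cycle is a chain twist. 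The orientation of the arcs matters here, so I would need to be a little careful to pick the cyclic order $a\to b\to c\to d\to a$ so that the two clique-edges $bc,da$ genuinely have an $A$-arc on both sides; since $K$ is complete, I am free to route the cycle through whichever clique-edges make the CT condition work.

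The case where $e_1$ and $e_2$ share a vertex is slightly more delicate but essentially the same. If the shared vertex is a tail of one and a head of the other, say $e_1=(a,b)$ and $e_2=(b,c)$, then these two arcs already lie on a common dipath of $H$, and the clique-edge $ca$ closes up the triangle $a\,b\,c\,a$; this triangle has a single non-$A$ edge $ca$ flanked by the two arcs, so it is a chain twist (this is exactly the ``induced path'' obstruction from Remark~\ref{rmk:remark-ct}). If instead the shared vertex is a common tail or a common head, that would violate \ref{cond:fas1} (a vertex on a dipath has out-degree and in-degree at most one), so such arcs cannot both be in $A$ in the first place, and this sub-case is vacuous. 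In every surviving configuration I produce a cycle in $G$ satisfying \eqref{eq:chaintwist}, contradicting that $A$ is a forcing arc set, which completes the proof.

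The main obstacle I anticipate is bookkeeping the arc orientations so that the closing clique-edges are correctly flanked by $A$-arcs in the cyclic order; this is what makes \eqref{eq:chaintwist} hold. Because $K$ is a clique I always have the edges I need to close a short cycle, so the only real content is choosing the cyclic ordering of the two or three distinguished vertices correctly and then mechanically checking \eqref{eq:chaintwist} at each position. Once the cycle is built, invoking Theorem~\ref{thm:chaintwist} (or directly Lemma~\ref{lem:closedwalk} if I allow a closed walk rather than a cycle) finishes the argument.
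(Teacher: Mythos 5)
Your proof is correct and takes essentially the same approach as the paper's: both argue by contradiction, closing the two arcs into a triangle (when they meet head-to-tail) or a four-cycle (when vertex-disjoint) using clique edges, then checking \eqref{eq:chaintwist} to exhibit a chain twist contradicting Theorem~\ref{thm:chaintwist}. Your explicit dismissal of the common-tail/common-head sub-case via \ref{cond:fas1} is a small point the paper leaves implicit, but otherwise the arguments coincide.
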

\begin{proof}
Let $C$ be a clique in $G$ and let $A$ be a forcing arc set of $G$.
Assume by contradiction that $A$ contains two arcs $(u,v)$ and $(x,y)$ so that $u,v,x,y\in C$. If $v=x$, then $uvy$ is a dipath and $yu$ is an edge of $G$, so $uvyu$ is a chain twist. Similar results hold when $u=y$. If $v\not=x$ and $y\not=u$, then $uvxyu$ is a chain twist.
\end{proof}

A \emph{clique cover} of a graph $G$ is a collection of cliques $\C$ so that, for every edge $e$ of $G$, there is a clique in $\C$ which contains both endpoints of $e$. The \emph{clique cover number} of a graph $G$, denoted $cc(G)$, is the minimum size of any clique cover of $G$. By the previous proposition, a forcing arc set $A$ of $G$ can contain at most one edge from each clique in a clique cover, so $|A|\leq|\C|$. This argument and Proposition \ref{prop:sizeBA} lead to the following corollary.

\begin{corollary}\label{corollary: zf cc}
    For any graph $G$ of order $n$, $Z(G)\geq n-cc(G)$. \hfill \qed
\end{corollary}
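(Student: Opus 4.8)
The plan is to combine the two preceding results directly: the proposition that a forcing arc set contains at most one arc from any clique, and Proposition~\ref{prop:sizeBA} relating the size of a forcing arc set to the size of its zero forcing set. The statement $Z(G)\geq n-cc(G)$ should follow by exhibiting an optimal forcing arc set and bounding its size from above using a minimum clique cover.

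First I would take a minimum clique cover $\C$ of $G$, so that $|\C|=cc(G)$, and let $A$ be an \emph{optimal} forcing arc set of $G$, meaning one attaining $|A|=n-Z(G)$ as guaranteed by Corollary~\ref{cor:sizeBA} (equivalently, the forcing arc set arising from a minimum zero forcing set via Proposition~\ref{prop:sizeBA}). By the definition of a clique cover, every edge of $G$ — and hence every arc $(u,v)\in A$, whose underlying edge $uv$ lies in $E$ — has both endpoints contained in some clique of $\C$. I would then define a map sending each arc of $A$ to a clique of $\C$ containing both its endpoints. The key step is to argue this map is injective: if two distinct arcs of $A$ were sent to the same clique $C\in\C$, then $A$ would contain two arcs from the clique $C$, contradicting the preceding proposition. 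Injectivity immediately gives $|A|\leq|\C|=cc(G)$.

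Combining this with $|A|=n-Z(G)$ yields $n-Z(G)\leq cc(G)$, which rearranges to $Z(G)\geq n-cc(G)$, as desired. Since the corollary is already marked with \texttt{\textbackslash qed} in the excerpt (indicating the authors regard it as an immediate consequence), I expect the proof to be short and the inequality to fall out mechanically once the injection is set up.

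The main obstacle, though minor, is the bookkeeping around the choice of clique when an arc's endpoints lie in several cliques of the cover simultaneously: the map must be well-defined, so I would fix an arbitrary but definite choice of clique for each arc (for instance, the lowest-indexed clique containing both endpoints). This ensures a genuine function rather than a relation, after which the at-most-one-arc-per-clique property gives injectivity cleanly. No cycle or chain-twist analysis beyond what the previous proposition already supplies is needed here.
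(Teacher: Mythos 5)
Your proposal is correct and follows exactly the paper's argument: the authors likewise observe that every arc of a forcing arc set lies in some clique of the cover, that the preceding proposition forces at most one arc per clique (your injection is just the explicit form of this counting), hence $|A|\leq|\mathcal{C}|=cc(G)$, and then Proposition~\ref{prop:sizeBA} applied to an optimal forcing arc set gives $n-Z(G)\leq cc(G)$. The paper states this in two sentences before the corollary; your write-up is the same proof with the bookkeeping (choice of clique, well-definedness of the map) spelled out.
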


This result was first proved in~\cite{FALLAT2007558} using linear algebraic techniques. 
In \cite[Theorem~9]{Huang2010}, it was shown that equality holds for Corollary~\ref{corollary: zf cc} when $G$ is a unit interval graph. An interval graph is a graph whose vertex set can be mapped to a set of connected intervals on the real line with two vertices adjacent if and only if the intersection of their corresponding intervals is nonempty. A unit interval graph is an interval graph where each of the corresponding intervals on the real line has a unit length. A proper interval graph is an interval graph where no interval properly contains another.
Roberts \cite{roberts1969indifference} proved the equivalence of unit interval graphs and proper interval graphs (that is, a graph has a unit interval graph representation if and only if it also has a proper interval graph representation).
Therefore, \cite[Theorem~9]{Huang2010} holds for all proper interval graphs.
We provide a shortened proof using forcing arc sets. 

\begin{theorem}
    For any proper interval graph $G$ of order $n$, $Z(G)=n-cc(G)$.
\end{theorem}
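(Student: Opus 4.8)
The goal is to show $Z(G)=n-cc(G)$ for a proper interval graph $G$. Corollary~\ref{corollary: zf cc} already gives $Z(G)\geq n-cc(G)$, so the entire task is to exhibit a forcing arc set $A$ with $|A|=cc(G)$ arcs; by Proposition~\ref{prop:sizeBA} this produces a zero forcing set of size $n-|A|=n-cc(G)$, matching the lower bound and forcing equality. My plan is therefore to \emph{construct} such an optimal forcing arc set explicitly from an interval representation, using the characterization of forcing arc sets via chain twists (Theorem~\ref{thm:chaintwist}).

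\textbf{Setting up the construction.} First I would fix a proper interval representation of $G$ and order the vertices $v_1,v_2,\dots,v_n$ by the left endpoints of their intervals; because the representation is proper, the right endpoints occur in the same order, which is the structural fact that makes everything work. Along this order the maximal cliques of $G$ appear as \emph{consecutive} runs of vertices, and there is a natural minimum clique cover of size $cc(G)$ whose cliques tile the vertex sequence as consecutive blocks. The natural candidate forcing arc set is to take, for each clique $C_i$ in this cover, a single arc joining the last vertex of $C_i$ to the first vertex of the next clique (or more simply, the arc $(v_j,v_{j+1})$ whenever $v_j$ and $v_{j+1}$ lie in the same clique of a suitable cover) so that $A$ consists of arcs that chain the vertices together respecting the left-to-right order. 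The idea is that $A$ forms a single directed Hamiltonian-type path (or a small union of dipaths) running left to right, with exactly one arc drawn from each clique, giving $|A|=cc(G)$.

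\textbf{Verifying it is a forcing arc set.} With $A$ built as a left-to-right chain, condition \ref{cond:fas1} (vertex-disjoint dipaths) is immediate since each $v_j$ has at most one in- and one out-arc. The crux is \ref{cond:fas3}: that $A$ contains no chain twist. Here I would use the proper-interval ordering crucially. Any arc $(v_i,v_j)\in A$ goes from a smaller to a larger index, and any edge of $G$ joins intervals that overlap, which in the proper ordering means their indices are not too far apart and the neighbourhoods are ``interval-like.'' I would argue that a chain twist would require a cycle $v_0v_1\dots v_{k-1}v_0$ in $G$ in which the arcs all point ``forward'' in the linear order yet the cycle returns to its start; combined with the condition \eqref{eq:chaintwist} and the consecutiveness of cliques, this yields a contradiction with the monotonicity of the ordering (one cannot have all arc-directions increasing along a closed cycle). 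Equivalently, since every edge lies inside some clique of the cover and each clique contributes at most one arc, the arcs that are ``missing'' from a putative twist cycle force the adjacent arcs to be present in a way incompatible with the linear order of endpoints.

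\textbf{Expected main obstacle.} The routine parts are the endpoint-ordering setup and the arc count $|A|=cc(G)$. The delicate step---the one I expect to absorb most of the work---is the rigorous verification that no chain twist arises, i.e.\ translating the proper-interval geometry into the purely combinatorial chain-twist condition. The subtlety is that a chain twist permits edges of $G$ that are \emph{not} in $A$ (the condition \eqref{eq:chaintwist} only constrains the neighbours of a missing arc), so I cannot simply say ``all arcs point forward.'' I must show that whenever an edge $v_iv_j$ of the cycle is not an arc of $A$, the forced neighbouring arcs $(v_{i-1},v_i)$ and $(v_{i+1},v_{i+2})$ cannot be arranged consistently around a closed cycle given the proper-interval ordering. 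I would handle this by tracking the linear positions of the cycle vertices and showing the twist condition forces a strictly monotone traversal of a cycle, which is impossible.
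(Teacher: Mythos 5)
Your overall strategy (lower bound from Corollary~\ref{corollary: zf cc}, then construct a forcing arc set with exactly $cc(G)$ arcs and conclude via Proposition~\ref{prop:sizeBA} and Theorem~\ref{thm:chaintwist}) matches the paper's, but your construction of $A$ is wrong, and the construction is the heart of the proof. Your structural premise already fails: a minimum \emph{edge} clique cover of a proper interval graph does not ``tile the vertex sequence as consecutive blocks'' --- its cliques are consecutive runs in the ordering, but they must overlap (in $P_3$ the minimum cover is $\{v_1,v_2\},\{v_2,v_3\}$, sharing $v_2$). Consequently both of your concrete candidates fail. The arc ``from the last vertex of $C_i$ to the first vertex of the next clique'' need not exist: with overlapping cliques those two vertices can coincide (in $P_3$, the last vertex of $C_1$ and the first vertex of $C_2$ are both $v_2$), in a disconnected graph they need not be adjacent, and in any case this yields roughly $cc(G)-1$ arcs, not $cc(G)$. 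The alternative, ``$(v_j,v_{j+1})$ whenever $v_j,v_{j+1}$ lie in a common clique of the cover,'' produces too many arcs and genuine chain twists: in $K_3$ it gives $A=\{(v_1,v_2),(v_2,v_3)\}$, and the cycle $v_1v_2v_3v_1$ satisfies \eqref{eq:chaintwist}, so $A$ is not a forcing arc set; counting arcs here would ``prove'' $Z(K_3)=1$, which is false. The picture of $A$ as a ``single directed Hamiltonian-type path'' is likewise not what an optimal forcing arc set looks like for these graphs.

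The construction that works (the paper's) takes, for each clique $C$ of a minimum cover consisting of maximal cliques, the \emph{single} arc from the lowest-indexed vertex of $C$ to the highest-indexed vertex of $C$ --- one arc spanning each clique, rather than arcs between consecutive vertices or between consecutive cliques. Minimality of the cover (no clique contains another) then forces the tails, and separately the heads, of distinct arcs to be distinct and consistently ordered ($i\le i'\Rightarrow j<j'$), which gives \ref{cond:fas1}. For \ref{cond:fas3} one uses a least-index argument: in a putative chain twist choose the least index $j_0$; the closing edge $v_{j_k}v_{j_0}$ cannot be an arc (all arcs go forward), so \eqref{eq:chaintwist} forces $(v_{j_{k-1}},v_{j_k})\in A$; letting $(v_s,v_t)$ be the arc of the clique covering $v_{j_0}v_{j_k}$, the ordering property yields $j_{k-1}<s\le j_0$, contradicting the minimality of $j_0$. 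Your sketched argument --- that a monotone traversal of a closed cycle is impossible --- cannot suffice on its own, precisely because of the subtlety you identify: a chain twist may use backward edges of $G$ that are not arcs. The contradiction genuinely needs the clique that covers the backward edge and the head/tail ordering of the spanning arcs, and that interplay is only available for the one-arc-per-clique construction you did not find.
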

\begin{proof}
Let $G = (V, E)$ be a proper interval graph; it is well-known (for example, \cite{roberts1971compatibility}) that $V$ can be ordered as $v_1,v_2,\dots, v_n$ such that if $v_iv_j\in E$ and $i<j$, then $\{v_i, v_{i+1}, \dots, v_j\}$ forms a clique. Assume such an ordering, and let $\C$ be a minimum clique cover. Without loss of generality, we can assume that $\C$ consists of maximal cliques. 
Let $A$ be the set of arcs $(v_i,v_j)$ where $v_i$ is the vertex with the lowest index in some clique $C\in\C$ and $v_j$ is the vertex with the highest index in $C$. 
    
We first show $A$ satisfies \ref{cond:fas1}. 
Every arc in $A$ uniquely corresponds to a clique in $\C$. Let $(v_i,v_j)$ and $(v_{i'},v_{j'})$ be two distinct arcs in $A$ and let $C$ and $C'$ be the corresponding cliques in $\C$. If $i\leq i'$ and $j'\leq j$, then $C'\subset C$, which contradicts the minimality of $\C$. So $i\leq i'\Rightarrow j<  j'$ and similarly, $i'\leq i\Rightarrow j'< j$.  This implies that $i\neq i'$. Using the contrapositive of the statements we obtain that $j\not= j'$, so the heads and tails of arcs in $A$ are distinct. It follows that each vertex has out-degree at most 1 in $H=(V, A)$.   
Since all arcs in $A$ are oriented from a smaller index to a larger index, $H$ is acyclic. Thus, $A$ satisfies \ref{cond:fas1}.  

By Theorem~\ref{thm:chaintwist}, it remains to show that $A$ does not contain a chain twist; that is \ref{cond:fas3}. 
Suppose $A$ contains a chain twist $v_{j_0}v_{j_1}\dots v_{j_k}v_{j_0}$, indexed so that $j_0$ is of the least value. 
This means $j_0<j_k$ and, by construction, $(v_{j_k},v_{j_0})\not\in A$. By definition, this implies that $(v_{j_0},v_{j_1})\in A$ and $(v_{j_{k-1}},v_{j_k})\in A$.
Let $C\in \C$ be the clique that covers edge $v_{j_0}v_{j_k}$, and let $(v_s,v_t)$ be the corresponding arc in $A$. 
By construction, we have $s\leq j_0< j_k\leq t$. Since $(v_s,v_t)$ and $(v_{j_{k-1}},v_{j_k})$ are in $A$ and $j_k\leq t$, it follows from the previous paragraph that $j_{k-1}<s\leq j_0$. This contradicts the assumption on $j_0$. 
Therefore, $A$ does not contain a chain twist.
    
We now discovered a forcing arc set $A$ with $|A| = cc(G)$. 
By Corollary~\ref{cor:sizeBA}, this implies that $Z(G)\leq n-cc(G)$ and, by Corollary \ref{corollary: zf cc}, we proved the equality. 
\end{proof}

\begin{definition}
Let $G = (V, E)$ and $G' = (V', E')$ be two graphs. 
The {\em strong product} of $G$ and $G'$ is the graph $G\boxtimes G' = (V\times V', E^*)$ where $(u, u')(v, v')\in E^*$ if either
\begin{enumerate}[noitemsep, label=(\alph*)]
    \item $u=v$ and $u'v'\in E'$, 
    \item $uv\in E$ and $u'=v'$, or 
    \item $uv\in E$ and $u'v'\in E'$. 
\end{enumerate}
\end{definition}
Note that we use the same notation for vertices in a product graph and directed edge, but the context to which we refer should be clear. 
\begin{proposition}\label{prop:product}
Let $A$ and $A'$ be forcing arc sets of graphs $G$ and $G'$, respectively. 
Let $G^* = G\boxtimes G'$ and define an arc set $A^*$ of $G^*$ as follows:
$$A^* =\{ ((u,u'),(v,v'))\,:\,(u,v)\in A,\,(u',v')\in A'\}.$$
The arc set $A^*$ is a forcing arc set of $G^*$.
\end{proposition}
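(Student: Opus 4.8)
The plan is to use the chain-twist characterization of forcing arc sets from Theorem~\ref{thm:chaintwist}: since $A^*$ is built from the forcing arc sets $A$ and $A'$, I would verify that $A^*$ satisfies \ref{cond:fas1} and then show it contains no chain twist \ref{cond:fas3}. For \ref{cond:fas1}, note that an arc $((u,u'),(v,v'))\in A^*$ requires $(u,v)\in A$ and $(u',v')\in A'$. Since each vertex has at most one out-neighbour and one in-neighbour in $A$ (and likewise in $A'$), a vertex $(u,u')$ can have at most one out-neighbour in $A^*$: its out-neighbour must be $(v,v')$ where $v$ is the unique $A$-successor of $u$ and $v'$ the unique $A'$-successor of $u'$. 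The same argument bounds in-degrees, so $H^*=(V\times V',A^*)$ is a disjoint union of dipaths. (One should also check no directed cycle arises; because a directed cycle in $A^*$ would project to closed directed walks in both $A$ and $A'$, forcing a directed cycle in at least one, contradicting that $A,A'$ satisfy \ref{cond:fas1}.)

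The main work is ruling out a chain twist. Suppose for contradiction that $A^*$ contains a chain twist, a cycle $(x_0,y_0)(x_1,y_1)\dots(x_{k-1},y_{k-1})(x_0,y_0)$ in $G^*$ satisfying \eqref{eq:chaintwist}. The idea is to project this cycle onto the two coordinates and derive a contradiction in one of the factors. For each index $i$, consecutive vertices $(x_i,y_i)$ and $(x_{i+1},y_{i+1})$ are adjacent in $G^*$, which by the strong-product definition means that in each coordinate the projected vertices are either equal or adjacent in the respective factor. The key structural observation to establish is the following: whenever $((x_i,y_i),(x_{i+1},y_{i+1}))\in A^*$, then $(x_i,x_{i+1})\in A$ and $(y_i,y_{i+1})\in A'$; and whenever the arc is absent, the chain-twist condition \eqref{eq:chaintwist} forces the surrounding arcs to be present, which in turn forces specific behaviour in each coordinate.

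I would then argue that the projection of the cycle to the first coordinate (after collapsing repeated vertices where $x_i=x_{i+1}$) yields a closed walk in $G$ that satisfies \eqref{eq:chaintwist} with respect to $A$, and similarly for the second coordinate with respect to $A'$; then Lemma~\ref{lem:closedwalk} would give a chain twist in $A$ or in $A'$, contradicting that these are forcing arc sets. The delicate point — and what I expect to be the main obstacle — is handling the indices where the two projected vertices coincide in one coordinate (say $x_i=x_{i+1}$ but $y_i\neq y_{i+1}$, corresponding to a ``vertical'' edge of the product). At such a step there is no genuine edge in $G$, so the projected first-coordinate walk stalls; I must check that the chain-twist condition, when pushed through the product structure, still propagates correctly so that the surviving projected walk in the coordinate that does move retains property \eqref{eq:chaintwist}. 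The bookkeeping here — verifying that an absent arc $((x_i,y_i),(x_{i+1},y_{i+1}))\notin A^*$ translates into the absence of the corresponding arc in whichever factor is active, and that the neighbouring forced arcs translate correctly — is where the real care is needed, since an arc can be missing from $A^*$ for several distinct reasons (one or the other coordinate arc missing, or a coordinate being a fixed point). Once the projections are shown to satisfy \eqref{eq:chaintwist}, the contradiction via Lemma~\ref{lem:closedwalk} is immediate, completing the proof that $A^*$ is a forcing arc set of $G^*$.
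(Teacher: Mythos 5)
Your proposal follows essentially the same route as the paper's proof: verify \ref{cond:fas1} by projecting the degree bounds to the factors, then rule out a chain twist by projecting the hypothetical cycle onto a coordinate, collapsing the stalled (repeated-vertex) steps, and applying Lemma~\ref{lem:closedwalk}. The bookkeeping you defer is carried out in the paper exactly as you anticipate: a vertical step has its arc absent from $A^*$, so \eqref{eq:chaintwist} forces the two surrounding arcs into $A^*$, whose projections lie in $A$, making each repeated pair isolated and safely collapsible (with the degenerate case of the cycle projecting to a single vertex handled by switching to the other coordinate, as your ``chain twist in $A$ or in $A'$'' allows).
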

\begin{proof}
We first show that $A^*$ satisfies \ref{cond:fas1}.  
Toward contradiction, suppose there exists $(u,u')\in V\times V'$ with out-degree strictly greater than one in $(V\times V', A^*)$. 
There then exist two vertices $(v_1,v_1')$ and  $ (v_2, v_2')$ in  $  V\times V'$ such that 
$ ((u,u'), (v_1,v_1'))$ and $ ((u,u'),(v_2, v_2')) $ are in $ A. $
By construction, this implies that the arcs $(u,v_1)$ and $ (u,v_2)$ are in $ A$; but then $u$ has an out-degree of at least two in $(V, A)$, which contradicts that $A$ is a forcing arc set. 
With a similar proof, we may show that every vertex in $(V\times V', A^*)$ has an in-degree of at most one. 

Next, we show that $A^*$ does not contain a chain twist. 
By contradiction, let $W^* = (u_0, v_0)\dots (u_{k-1}, v_{k-1})(u_0, v_0)$ be a chain twist in $A^*$. 
Let $e_i^*$ denote the edge or arc incident to $(u_i, v_i)$ and $(u_{i+1}, v_{i+1})$.
By the definition of a strong product,  $W_G = u_0\dots u_{k-1}u_0$ is a closed walk in $G$, which may contain repeating vertices. We assume that $W_G$ contains at least two different vertices; if this is not the case, we switch the roles of $G$ and $H$. 

We will {derive a contradiction by showing} that $W_G$ satisfies \eqref{eq:chaintwist}. By Lemma~\ref{lem:closedwalk}, it then follows that $W_G$ contains a chain twist, which contradicts that $A$ is a forcing arc set of $G$. 

If $u_i = u_{i+1}$, then $e_i^*\not\in A^*$ by construction, which then means that $e_{i-1}^*, e_{i+1}^*\in A^*$ by definition.  
That is, if there are consecutive repeating vertices, there are at most two in a row; each such repeating pair is surrounded by two arcs in $A$. Therefore, we may treat any pair of consecutive repeating vertices as one. {Without loss of generality,} we may assume in the following that $W_G$ does not contain any consecutive repeating vertices.  With this assumption, let $e_i$ be the arc or edge in $W_G$ incident to $u_i$ and $u_{i+1}$.

If $e_i\not\in A$, then $e_i^*\not\in A^*$. But since $W^*$ is a chain twist, this implies that $e_{i-1}^*$ and $e_{i+1}^*$ are in $A^*$. By definition, this implies that $e_{i-1}$ and $e_{i+1}$ are in $A$. Thus, \eqref{eq:chaintwist} holds for $W_G$ and $A$ contains a chain twist, which gives the contradiction. 
\end{proof}

Proposition~\ref{prop:product} has a corollary that gives abound on the zero forcing number of the strong product of graphs first shown in~\cite{Huang2010}. 

\begin{corollary}
For any two graphs $G$ and $H$,   
\begin{equation*}
  Z(G\boxtimes H)\leq  n_G\,Z(H)+ n_H\, Z(G)-Z(G)Z(H),   
\end{equation*}
where $n_G$ and $n_H$ are the orders of $G$ and $H$, respectively.
\end{corollary}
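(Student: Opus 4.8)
The plan is to use Proposition~\ref{prop:product} together with the arc-counting identity from Proposition~\ref{prop:sizeBA} (via Corollary~\ref{cor:sizeBA}). The key observation is that the stated bound
\[
Z(G\boxtimes H)\leq n_G\,Z(H)+n_H\,Z(G)-Z(G)Z(H)
\]
is exactly what one obtains by translating a lower bound on the size of a large forcing arc set $A^*$ of $G^* = G\boxtimes H$ through Corollary~\ref{cor:sizeBA}, which gives $Z(G^*)\leq n_{G^*}-|A^*|$. Here $n_{G^*}=n_G n_H$. So the whole problem reduces to \emph{constructing} a forcing arc set of $G^*$ with $|A^*|$ as large as the claimed bound requires.

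First I would take \emph{optimal} forcing arc sets $A$ of $G$ and $A'$ of $H$, meaning ones attaining the bound of Corollary~\ref{cor:sizeBA}, so that $|A| = n_G - Z(G)$ and $|A'| = n_H - Z(H)$ (such optimal sets exist by definition, since a minimum zero forcing set yields a representing forcing arc set via Proposition~\ref{prop:sizeBA}). Then I would apply Proposition~\ref{prop:product} to the \emph{product} arc set $A^* = \{((u,u'),(v,v')) : (u,v)\in A,\ (u',v')\in A'\}$, which guarantees $A^*$ is a forcing arc set of $G^*$. The main computational step is counting $|A^*|$: since each arc of $A^*$ is determined by an independent choice of one arc from $A$ and one arc from $A'$, we have $|A^*| = |A|\cdot|A'| = (n_G - Z(G))(n_H - Z(H))$. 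Substituting into Corollary~\ref{cor:sizeBA} gives
\[
Z(G^*)\leq n_G n_H - (n_G - Z(G))(n_H - Z(H)),
\]
and expanding the product yields $n_G n_H - n_G n_H + n_G Z(H) + n_H Z(G) - Z(G)Z(H) = n_G Z(H) + n_H Z(G) - Z(G)Z(H)$, which is precisely the claimed bound.

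The only subtle point, and the place I would be most careful, is the counting step $|A^*| = |A|\cdot|A'|$: I must confirm that the map $(a,a')\mapsto$ the product arc is injective, i.e.\ that distinct pairs of arcs give distinct arcs of $A^*$. This is immediate from the construction, since an arc $((u,u'),(v,v'))$ of $A^*$ records both coordinate arcs $(u,v)$ and $(u',v')$ uniquely. I would also note explicitly that the defining condition of the strong product guarantees each such product arc is indeed an edge of $G^*$ (case (c) of the strong product definition applies whenever $uv\in E$ and $u'v'\in E'$), so $A^*$ is a legitimate arc set of $G^*$ before invoking Proposition~\ref{prop:product}. No genuine obstacle arises here; the entire argument is the composition of an existence result (Proposition~\ref{prop:product}), a product count, and the arc-size-to-zero-forcing translation, making this a clean two-line proof in the forcing-arc-set language.
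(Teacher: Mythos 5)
Your proposal is correct and follows essentially the same route as the paper: take optimal forcing arc sets $A$ of $G$ and $A'$ of $H$, form the product arc set $A^*$ via Proposition~\ref{prop:product}, count $|A^*| = |A|\,|A'| = (n_G - Z(G))(n_H - Z(H))$, and conclude through Corollary~\ref{cor:sizeBA}. Your extra remarks on the injectivity of the arc-pairing and on case (c) of the strong product definition are sound details that the paper leaves implicit.
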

\begin{proof} Let $A$ and $A'$ be optimal forcing arc sets of $G$ and $H$, respectively. So $|A|=n_G-Z(G)$ and $|A'|=n_H-Z(G)$. 
We first form a forcing arc set of $A^*$ of $G\boxtimes H$ as in Proposition~\ref{prop:product}.
Any forcing arc set of $G\boxtimes H$ therefore has size at least $|A^*|=|A|\,|A'|$, and thus
\begin{eqnarray*}
Z(G\boxtimes H)&\leq &n_Gn_H-|A^*|\\
 &=& n_Gn_H-(n_G-Z(G))(n_H-Z(H))\\
 &=& n_GZ(H) + n_HZ(G)-Z(G)Z(H).
 \end{eqnarray*}
\end{proof}

\section{Conclusion and Further work}\label{sec:conclusion}

We gave an approximation algorithm for finding a zero forcing set that has an approximation ratio of $\pw(G)+1$. The algorithm returns a zero forcing set $S$ and a fort packing $\F$ of size at most $(\pw(G)+1)|S|$. A python implementation of our algorithm is available at \url{https://github.com/OwenZZY/CJMZ_ZF}. Our algorithm immediately leads to a bound on the ratio between zero forcing number and fort number. It would be interesting to see if this bound can be improved or if a family of graphs attaining the bound can be found.

\begin{problem}
    Is the bound $Z(G)\leq (\pw(G)+1)\ft(G)$ sharp?
\end{problem}

The treewidth of a graph is more commonly used than the pathwidth. Moreover, the pathwidth can be up to a factor $\log(n)$ larger than the treewidth. Adapting the algorithm by starting from a tree decomposition may be possible. 

\begin{problem}
  Can the algorithm be adapted to take a tree decomposition as input? If so, does this lead to a better approximation ratio?  
\end{problem}

The {\em positive semidefinite (PSD) zero forcing} process is a variation of the zero forcing process \cite{barioli2008zero}; it is also known as \emph{treewidth zero forcing} in \cite{mitchell2015zero}. 
Let $B$ be a set of forced vertices in $G$ and let $\mathcal G$ represent a collection of connected components of $G-B$. 
Let $V_i$ denote the vertex set for each component and let $G_i = G[V_i\cup B]$. 
The {\em colour change rule for PSD zero forcing} states that a blue vertex $u$ can force a white vertex $v\in V_i$ if $v$ is the only white neighbour of $u$ in $G_i$. 
The forcing arcs then form a collection of directed trees. 
This leads to our final open problem.

\begin{problem}
    Does the chain twist characterization apply to PSD zero forcing?
    Can we modify our algorithm to apply to treewidth zero forcing and take a tree decomposition as input? 
\end{problem}

\section*{Acknowledgements}

Ben Cameron (grants RGPIN-2022-03697 and DGECR-2022-00446) and Jeanette Janssen (grant RGPIN-2017-05112) gratefully acknowledge research support from NSERC. The work of Zhiyuan Zhang was supported by the Ontario Graduate Scholarship (OGS) Program.

\bibliographystyle{abbrv}
\bibliography{ZeroForcing}

\end{document}